\documentclass[11pt]{article}
\usepackage[T1]{fontenc}
\usepackage[margin=1in]{geometry}
\usepackage[utf8]{inputenc}
\usepackage{lmodern,amsmath,amssymb,amsthm,mathtools}
\usepackage{microtype,enumitem,booktabs,tabularx}
\usepackage{xcolor}
\definecolor{MidnightBlue}{cmyk}{0.98,0.13,0,0.43}
\usepackage{tikz-cd}
\usepackage{bookmark}
\usepackage{hyperref}
\hypersetup{
    colorlinks=true,
    linkcolor=MidnightBlue,
    citecolor=MidnightBlue,
    urlcolor=MidnightBlue
}
\numberwithin{equation}{section}
\newtheorem{theorem}{Theorem}[section]

\newtheorem{proposition}{Proposition}[section]
\newtheorem{lemma}{Lemma}[section]

\newtheorem{corollary}[theorem]{Corollary}
\theoremstyle{definition}
\newtheorem{definition}[theorem]{Definition}
\theoremstyle{remark}

\newcommand{\D}{\mathbb D}
\newcommand{\T}{\mathbb T}
\newcommand{\C}{\mathbb C}
\newcommand{\ol}{\overline}
\renewcommand{\cal }{\mathcal}
\newcommand{\E}{\mathbb E}

\newcommand{\dd}{\,\mathrm d}
\newcommand{\ip}[2]{\langle #1,#2\rangle}
\newcommand{\norm}[1]{\lVert #1\rVert}
\newcommand{\abs}[1]{\lvert #1\rvert}

\DeclareMathOperator{\Rea}{Re}

\setlist[enumerate]{label=\textup{(\roman*)},leftmargin=*,itemsep=3pt}
\title{On the existence of independent inner functions}

\author{Xiangdi Fu, Kunyu Guo, Dilong Li and Yanqi Qiu}
\date{}

\hypersetup{
  pdftitle={On the existence of independent inner functions},
  pdfauthor={Xiangdi Fu, Kunyu Guo, Dilong Li, Yanqi Qiu}
}

\begin{document}
\maketitle

\begingroup
\renewcommand{\thefootnote}{}
\footnotetext{%
  \textit{2020 Mathematics Subject Classification.}
  Primary 30J05; Secondary 30J10, 30H10.
}
\endgroup

\begin{abstract}
The existence of independent inner functions is established: there exist inner functions $\varphi$ and $\psi$, both
vanishing at the origin, such that
$$
\frac{1}{2\pi}\int_0^{2\pi}
\cramped{\varphi^m(e^{i\theta})}
\overline{\psi^n(e^{i\theta})}\,\dd\theta = 0,
\qquad m,n \geq 1.
$$
In fact, both $\varphi$ and $\psi$ can be chosen to be Blaschke products. The proof proceeds by an iterative construction, which suggests a general approach to produce inner functions with a desired joint distribution.
\end{abstract}

\section{Introduction}

\smallskip
\noindent\textbf{Background.}
Let $\D\subset \C$ be the open unit disk, and let $\T=\partial \D$ be the unit circle, equipped with the normalized Lebesgue measure $d\sigma$. For $1\leq p \leq \infty$, let $H^p$ denote the classical Hardy space: $$H^p:= \left\{ f\in L^p(\T): \int_\T f(w) \overline{w}^{n} d\sigma(w) = 0 \text{ for } n <0\right\}.$$ As usual, any function $f\in H^p$ is identified with its analytic extension $$f(z):= \int_\T f(w) P_z(w) d\sigma(w),  \quad z\in \D,$$ where $P_z(w)= \frac{1-|z|^2}{|1-\overline{z}w|^2}$ denotes the Poisson kernel. 

A function $\varphi \in H^\infty$ is called \emph{inner} if $|\varphi(w)| = 1$ for almost every $w\in \T$. Two inner functions are called \emph{independent} if they are independent when viewed as random variables on the probability space $(\T,\sigma)$. We are interested in the following problem:

\begin{quote}
  \emph{Do there exist two (nonconstant) independent inner functions?}
\end{quote}

This question was posed by Forelli and was  studied in Richards's 1974 thesis \cite{Richards1974}. Richards showed that if there exist two independent inner functions, then there exists a countably infinite family of such functions \cite[Theorem 2.12]{Richards1974}, and linked Forelli's problem to isometric embeddings between Hardy spaces on tori of different dimensions. Later, Rudin pointed out a natural connection between Forelli's
question and the following problem.
\begin{quote}
\emph{Let $f$ be an $H^\infty$-function satisfying 
\begin{align}\label{eq:Rudin-orthogonality-function}
\int_\T \cramped{f^m}\,\overline{f^n}\,d\sigma = 0
\qquad \text{for all } m,n\geq 0 \text{ with } m\neq n.
\end{align}
Is $f$ necessarily a multiple of an inner function vanishing at the origin?}
\end{quote} 

In fact, any pair of nonconstant independent inner functions $(\varphi,\psi)$ can be normalized (by post-composition with a suitable M\"obius transformation) so that $\varphi(0)=\psi(0)=0$, and the independence is then equivalent to $$\int_\T \cramped{\varphi^m} \overline{\psi^n} d\sigma =0, \quad m,n \geq 1.$$ Now if such $\varphi$ and $\psi$ exist, then an easy calculation shows that
$
    f:=\varphi+\psi
$
satisfies \eqref{eq:Rudin-orthogonality-function}, but $f$ is not a constant multiple of an inner function, thereby providing a counterexample to Rudin's problem. We note that the above connection between these two problems was also discussed in an unpublished preprint of Cima, Korenblum and Stessin \cite{CimaKorenblumStessin1993}.

Although counterexamples (without relying on the existence of independent inner functions) to Rudin's orthogonality problem have been constructed by Sundberg \cite{Sundberg2003} and Bishop \cite{Bishop2005}, the existence of independent inner functions had remained unresolved.

\smallskip\noindent\textbf{Main results.} The main aim of this note is to show that such independent inner functions do exist, in a stronger form.

\begin{theorem}\label{thm:main}
There exist inner functions $\varphi,\psi$ such that
\begin{align}\label{eq:Blaschke-independent}
  \varphi(0)=\psi(0)=0,
  \qquad
  \int_{\T} \cramped{\varphi^m} \overline{\psi^n}\dd \sigma=0
  \quad m,n\geq 1.
\end{align}
Moreover, both $\varphi$ and $\psi$ can be chosen to be Blaschke products.
\end{theorem}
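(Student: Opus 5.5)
The plan is an iterative construction of finite Blaschke products $\varphi_k,\psi_k$ vanishing at $0$ that converge to inner functions $\varphi,\psi$. At stage $k$ the pair satisfies $\int_\T \varphi_k^m\overline{\psi_k^n}\dd\sigma=0$ for all $1\le m,n\le k$. It is convenient to reformulate independence through joint distributions: condition \eqref{eq:Blaschke-independent}, for inner functions vanishing at $0$, is equivalent to the push-forward of $\sigma$ under $(\varphi,\psi)$ being the Haar measure on $\T^2$. This holds because the mixed moments with $m,n$ of opposite signs are the ones to control; those with the same sign reduce to $\int\varphi^m\psi^n\dd\sigma=\varphi(0)^m\psi(0)^n=0$ by analyticity. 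So the goal is a limiting pair with Haar joint law, obtained as a limit of pairs whose joint moments of order $\le k$ vanish.

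The basic tool is a \emph{correction lemma}. Given a finite Blaschke product $B$ with $B(0)=0$ and a fast-oscillating inner function such as $z^N$, compositions like $u\circ B$ and products $B\cdot(z^N\circ\cdot)$ have controlled distributions. If $B$ is inner with $B(0)=0$, then $B$ is measure preserving on $\T$, so $g\circ B$ has the same distribution as $g$. Moreover, for large $N$ the functions $F$ and $z^N F'$ (with $F,F'$ fixed continuous functions) are asymptotically independent, by a Riemann--Lebesgue/equidistribution argument on $\T$. The inductive step is then: given $(\varphi_k,\psi_k)$, set
$$
\varphi_{k+1}=\varphi_k\cdot b_k,\qquad \psi_{k+1}=\psi_k\cdot c_k,
$$
where $b_k,c_k$ are finite Blaschke products close to $1$ in $L^2$, for example $b_k(z)=\frac{z^{N}-r}{1-rz^{N}}\cdot(\text{unimodular constant})$ with $r$ near $1$, or products of a few such factors. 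We then solve for the finitely many parameters so that the new moments of order $\le k+1$ vanish exactly. Exact vanishing comes from a finite-dimensional degree argument, namely an implicit function or Brouwer fixed point argument on the parameters. For large $N$ the moments are approximately given by an explicit, invertible map of the parameters, and an approximate solution can be upgraded to an exact one by continuity.

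Convergence comes from choosing the perturbations summably small: $\norm{\varphi_{k+1}-\varphi_k}_2<2^{-k}$. Then $\varphi_k\to\varphi$ in $L^2$ and locally uniformly on $\D$, and likewise for $\psi$. The limits are inner, because $L^2$ limits of unimodular functions are unimodular a.e., and they vanish at $0$. Moments pass to the limit because $L^2$ convergence of unimodular functions gives convergence of $\varphi_k^m\overline{\psi_k^n}$ in $L^1$. For the Blaschke claim I would arrange that the infinite products $\prod b_k$ and $\prod c_k$ converge as Blaschke products. This requires that their zeros satisfy the Blaschke condition and that no singular factor appears. It can be ensured by making each factor $b_k$ satisfy $\abs{1-b_k(0)}$ summable, using the standard criterion that a locally uniform limit of finite Blaschke products is a Blaschke product once $\sum(1-\abs{b_k(0)})<\infty$ after normalization.

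The main obstacle is the correction lemma: killing all moments of order $\le k+1$ exactly while perturbing only slightly in $L^2$ and without destroying the lower-order moments. Since $\sigma$-moments of $\varphi_k b$ involve $\varphi_k$ in a nonlinear way, I would first try to exploit asymptotic independence from high-frequency factors. For $b=u(z^N)$ with large $N$, the moment $\int \varphi_k^m u(z^N)^m\overline{\psi_k^n v(z^N)^n}\dd\sigma$ tends to the product $\int\varphi_k^m\overline{\psi_k^n}\dd\sigma\cdot\int u^m\overline{v^n}\dd\sigma$. This reduces the problem to designing a pair $(u,v)$ of finite Blaschke products whose small-order joint moments take prescribed small values, after which exact vanishing is obtained by the degree argument above.
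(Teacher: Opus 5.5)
\textbf{Gap 1: the induction does not control the moments it will later need to kill.} Your reduction to joint laws, the passage to the limit, and the Blaschke bookkeeping are fine. For inner $b$ one has $1-\abs{b(0)}\le 1-\Rea b(0)=\tfrac12\norm{b-1}_2^2$, so summable $L^2$ steps already give the Blaschke condition, in parallel with the paper's divisibility argument. The gap is the correction lemma. Your hypothesis controls only the moments with $1\le m,n\le k$. Those with $\max(m,n)=k+1$ are uncontrolled at stage $k$ and may be of size comparable to $1$. For unimodular functions $\norm{\varphi^m-\tilde\varphi^m}_2\le m\norm{\varphi-\tilde\varphi}_2$. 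Hence a step with $\norm{\varphi_{k+1}-\varphi_k}_2,\norm{\psi_{k+1}-\psi_k}_2<2^{-k}$ moves each such moment by at most $2(k+1)2^{-k}$, and can kill them only if they were already that small. You need an inductive quantity that also controls the moments not yet corrected. You also need a step that moves the functions by a power of that quantity while improving it superlinearly. The paper tracks $\norm{\mu_{\varphi_j,\psi_j}-1}_{\mathcal W}\le\varepsilon_j$, which controls all mixed moments at once. It moves by $O(\varepsilon_j^{1/8104})$ in $H^2$ and gets $\varepsilon_{j+1}=C\varepsilon_j^{5/4}$; exact vanishing is reached only in the limit.

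\textbf{Gap 2: high-frequency factors cannot change moments additively.} Even for small moments the mechanism fails. As you observe,
\[
  \ip{(\varphi_k\,u(z^N))^m}{(\psi_k\,v(z^N))^n}\longrightarrow\ip{\varphi_k^m}{\psi_k^n}\,\ip{u^m}{v^n}\qquad(N\to\infty).
\]
Equivalently, the joint law tends to the convolution $\mu_{\varphi_k,\psi_k}*\mu_{u,v}$ on $\T^2$, so these factors act \emph{multiplicatively} on Fourier coefficients. Suppose $\norm{u-\alpha}_2,\norm{v-\beta}_2\le\delta$ with $\abs{\alpha}=\abs{\beta}=1$, which is what $L^2$-smallness of $b_k-1$ and $c_k-1$ amounts to. Then $\abs{\ip{u^m}{v^n}}\ge 1-(m+n)\delta$. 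So a nonzero moment is only multiplied by a factor close to $1$, and with $\sum_k\delta_k<\infty$ the accumulated factor stays bounded away from $0$. Making $\ip{u^m}{v^n}\approx 0$ would require $(u,v)$ itself to be nearly independent, hence far from constants. That is the original problem, and it destroys the $L^2$ smallness. For the same reason your limiting parameter map vanishes identically at the indices $m,n\le k$. It is therefore not invertible there, and no implicit-function or degree argument can repair the finite-$N$ errors in those moments.

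\textbf{What the paper does instead.} The missing idea is an \emph{additive} correction correlated with $(\varphi_k,\psi_k)$.
\begin{enumerate}
  \item The paper linearizes $\ip{(\varphi+h)^m}{(\psi+k)^n}$ and solves $T(h,\ol k)=-Y$ with a Moore--Penrose inverse. The bound $\norm{(TT^*)^{-1}}\lesssim M^4$ comes from the near-independence of $(\varphi,\psi)$ itself.
  \item It truncates the solution to $H^\infty$ and rescales by $t<1$ to suppress the tail moments.
  \item Only then does it return to inner functions via Proposition~\ref{prop:inner-approximation}.
\end{enumerate}
Your decoupling idea does appear there, but in the opposite role. The oscillating random singular inner functions $S_1,S_2$ make a non-inner pair $(f,g)$ inner \emph{without} changing its joint moments, since the new law is $\mathcal W$-close to $\rho_{f,g}$. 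They are not used to change the moments.
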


The proof of Theorem~\ref{thm:main} is existential. In fact, a pair of independent inner functions is obtained as the limit of a ``ping-pong'' iteration, whose two ``players'' are Propositions~\ref{prop:inner-approximation} and~\ref{prop:correction} below. The argument thus naturally divides into three steps: establishing the two propositions in turn and then alternating their applications to construct the desired limit. To make the main ideas of the proof more transparent, we choose to first construct independent inner functions vanishing at the origin, and then explain how to refine the argument to obtain Blaschke products with the
same properties.
\smallskip

It should be noted that any pair of independent inner functions vanishing at the origin must exhibit highly irregular boundary behavior. Indeed, independence implies that
$\{\varphi^m\psi^n:m,n\ge0\}$ is an orthonormal family in $H^2$.
Applying Bessel's inequality to the reproducing kernel for $H^2$ gives
\[
  \frac{1}{(1-|\varphi(z)|^2)(1-|\psi(z)|^2)}
  \leq \frac{1}{1-|z|^2},\quad z\in \D.
\]
Since $\psi$ is inner, for almost every $w\in\T$,
$$
  \frac{1-|\varphi(rw)|^2}{1-r^2}
  \ge\frac{1}{1-|\psi(rw)|^2}
  \longrightarrow\infty
  \qquad \text{as }r\to 1.
$$
By the Julia--Carath\'eodory theorem
\cite[Theorem~3.1]{Saksman2007}, the angular derivative of $\varphi$ is infinite almost everywhere on $\T$.
In particular, $\varphi$ admits no analytic continuation across any nonempty open arc of $\T$, and $\varphi'$ does not belong to the Nevanlinna class. The latter assertion is equivalent, by a theorem of Craizer \cite{Craizer1991}, to $\varphi$ having infinite Kolmogorov--Sinai entropy.

\smallskip\noindent\textbf{Further applications and related problems.}
As pointed out in \cite{Richards1974} and \cite{Wolniewicz1987}, if $(\varphi,\psi)$ is a pair of independent inner functions vanishing at the origin, then
$
\varphi_n := \varphi \circ \psi^{\circ n}
$ form a sequence of jointly independent inner functions, where $\psi^{\circ n}$ denotes the $n$-th iteration of $\psi$.
Thus the map
\begin{align*}
\Phi:\T\longrightarrow\T^\infty,\qquad
z\mapsto(\varphi_1(z),\varphi_2(z),\ldots)
\end{align*}
preserves the Haar measure. In particular, the induced composition operator gives an isometric embedding of the corresponding Hardy space on $\T^\infty$ into that on $\T$.

\begin{corollary}\label{cor:corollaries-of-inner}
For any $0<p\leq \infty$, the map 
\begin{align*}
  J_\Phi: H^p(\T^\infty) \longrightarrow H^p(\T); \quad F\mapsto F\circ \Phi, 
\end{align*} 
is an isometric embedding. In particular, composing $J_\Phi$ with the natural embedding $H^p(\T^d)\hookrightarrow H^p(\T^\infty)$ yields an
isometric embedding of $H^p(\T^d)$ into $H^p(\T)$ for every finite $d$. 
\end{corollary}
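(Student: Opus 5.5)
We need to prove Corollary~\ref{cor:corollaries-of-inner}, taking as given that $\varphi_n=\varphi\circ\psi^{\circ n}$ is a jointly independent family of inner functions, each uniformly distributed on $\T$. The plan is to first get joint independence and uniform distribution, then deduce that $\Phi_*\sigma$ is Haar measure $m_\infty$ on $\T^\infty$. Isometry on polynomials is then a change of variables, and a density argument finishes the proof.

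\textbf{Step 1 (joint independence and uniformity).} Uniformity follows because an inner function $\theta$ with $\theta(0)=0$ satisfies $\int_\T \theta^k\,d\sigma=\theta(0)^k=0$ for $k\ge1$. Since $\theta$ is unimodular, $\int\overline{\theta}^k\,d\sigma=0$ as well, so the Fourier coefficients of $\theta_*\sigma$ vanish off zero and $\theta_*\sigma=\sigma$. The key structural fact is that composition with an inner function $u$ vanishing at $0$ preserves $\sigma$, i.e.\ $\int f\circ u\,d\sigma=\int f\,d\sigma$ for $f\in L^1(\T)$. This holds because the harmonic extension of $f\circ u$ is the harmonic extension of $f$ composed with $u$, evaluated at $0$.

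Joint independence of $\varphi_1,\dots,\varphi_N$ amounts to the vanishing of mixed moments
\[
\int_\T \prod_{j=1}^N \varphi_j^{k_j}\,d\sigma=0\qquad (k\in\mathbb Z^N\setminus\{0\}).
\]
I would prove this by induction on $N$. Write $\varphi_j=\varphi_{j-1}'\circ\psi$, where $\varphi'_{j-1}=\varphi\circ\psi^{\circ(j-1)}$ is the shifted family, so that the product becomes $\varphi_1^{k_1}\cdot(G\circ\psi)$ with $G$ a product over the shifted family. Independence of $\varphi_1=\varphi\circ\psi$ from functions of $\psi$ is not immediate. Instead, I would use the independence of $\varphi$ and $\psi$, which extends from the trigonometric moments to all bounded Borel functions: $\int F(\varphi)G(\psi)\,d\sigma=\int F\,d\sigma\int G\,d\sigma$. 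It is cleaner to reindex with $\varphi_0=\varphi$ and $\varphi_n=\varphi\circ\psi^{\circ n}$ for $n\ge1$. Then
\[
\varphi_0^{k_0}\prod_{j\ge1}\varphi_j^{k_j}=\varphi^{k_0}\cdot (H\circ\psi),\qquad H=\prod_{j\ge1}(\varphi\circ\psi^{\circ(j-1)})^{k_j}.
\]
Independence gives that the integral factors as $\int\varphi^{k_0}\,d\sigma\cdot\int H\circ\psi\,d\sigma$. Measure preservation by $\psi$ gives $\int H\circ\psi\,d\sigma=\int H\,d\sigma$, and the induction hypothesis handles $\int H\,d\sigma$. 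Hence the family $(\varphi_n)_{n\ge0}$, and a fortiori its subfamily $(\varphi_n)_{n\ge1}$, is jointly independent and uniform. Consequently $\Phi_*\sigma=m_\infty$, since the Fourier coefficients of $\Phi_*\sigma$ agree with those of $m_\infty$ on all finitely supported characters.

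\textbf{Step 2 (isometry).} For any Borel $F$ on $\T^\infty$ we have $\int_\T|F\circ\Phi|^p\,d\sigma=\int_{\T^\infty}|F|^p\,dm_\infty$, so $F\mapsto F\circ\Phi$ is an isometry from $L^p(\T^\infty)$ into $L^p(\T)$ for all $0<p\le\infty$. It remains to check that $H^p$ lands in $H^p$. For an analytic polynomial $F$ in finitely many variables, $F\circ\Phi$ is a polynomial in the $\varphi_n$, hence lies in $H^\infty$. For $0<p<\infty$, $H^p(\T^\infty)$ is the closure of such polynomials, so the isometry extends and the image lies in the closed subspace $H^p(\T)$. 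For $p=\infty$, I would use that $F\in H^\infty(\T^\infty)$ lies in $H^2(\T^\infty)$, so $F\circ\Phi\in H^2(\T)\cap L^\infty=H^\infty$ by the $p=2$ case. The one subtlety is confirming that $J_\Phi F$ is defined independently of the Borel representative of $F$, which holds because $\Phi$ pushes $\sigma$ to $m_\infty$, so null sets pull back to null sets. The finite-dimensional statement follows by composing with $H^p(\T^d)\hookrightarrow H^p(\T^\infty)$.

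The main obstacle is Step 1's factorization. Its crux is upgrading independence of $\varphi,\psi$ from moments to bounded Borel functions, which follows by Fourier density on $\T^2$, together with measure preservation under composition by $\psi$.
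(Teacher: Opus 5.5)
Your proposal is correct and follows the same route as the paper: joint independence and uniformity of $\varphi_n=\varphi\circ\psi^{\circ n}$ give $\Phi_*\sigma=m_\infty$, so $F\mapsto F\circ\Phi$ is an isometry, and density of analytic polynomials (together with $H^2\cap L^\infty=H^\infty$ for $p=\infty$) places the image in $H^p(\T)$. The only difference is that the paper cites Richards and Wolniewicz for the joint independence, whereas you prove it yourself via the factorization $\varphi^{k_0}\cdot(H\circ\psi)$, independence of $\varphi$ and $\psi$ for bounded Borel functions, and the invariance $\psi_*\sigma=\sigma$; this is a valid way to fill in that step.
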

\noindent This interesting corollary suggests a possible route for reducing questions about Hardy spaces on higher-dimensional tori to the one-variable setting; we refer to \cite{Defant2019,Queffelec2013} for the general theory of Hardy spaces on the infinite-dimensional torus and its natural connections to Diophantine approximation and Dirichlet series.

We also believe that the existence of independent inner functions
may shed light on other problems. Indeed, building on Theorem~\ref{thm:main}, the first and third authors, together with Kenan Zhang \cite{FuLiZhang2026}, have obtained (with assistance from ChatGPT Astra) a complete characterization of the measures on $\C$ arising as distributions of functions in $H^\infty(\Omega)$, where $\Omega$ is either $\T^d$ or the unit sphere $\partial\mathbb B_d$ in $\C^d$, thereby resolving a problem posed by Sundberg in \cite{Sundberg2003}.

A Blaschke product is determined, up to a unimodular constant, by its
zeros, counted with multiplicity. Nevertheless, it seems difficult to
write down an explicit pair of Blaschke products satisfying \eqref{eq:Blaschke-independent}. We are curious to understand
\begin{quote}
  \emph{What geometric conditions on the zero sequences, or what underlying principles, determine the independence of the corresponding Blaschke products?}
\end{quote}
Motivated by Marshall's celebrated theorem that the closed unit ball of $H^\infty$ is the norm-closed convex hull of the Blaschke
products \cite{Marshall1976}, we also pose the following problem.

\begin{quote}
\emph{Consider the sets of all pairs of independent inner functions (Blaschke products) vanishing at the origin. How large are these sets in $zH^\infty\times zH^\infty$?}
\end{quote}

\smallskip
\noindent\textbf{AI Disclosure.} This project began in August 2025. Before AI tools came into widespread use, we had observed several necessary conditions for inner functions to be independent, including those mentioned following
Theorem~\ref{thm:main} (which should be familiar to experts) and others involving Aleksandrov--Clark measures. We had also explored several approaches aimed at proving the nonexistence of independent inner functions, but without success.

From June 2026, we began exploring this problem extensively with OpenAI's ChatGPT-5.5, Sol, and Astra, initially with limited substantive progress. In mid-September, while examining a proof provided by Astra from our work on another problem concerning Hardy spaces on the infinite-dimensional torus, we identified a useful clue for the present problem. Further discussions with Astra then led to an initial proof of Theorem~\ref{thm:main}.

The proofs provided by Astra were mathematically complete. We only simplified and reorganized some of the arguments to improve the readability, and identified some connections not explicitly emphasized by ChatGPT including the Poisson point process underlying \(\widetilde{\Pi}\), Carath\'eodory--Schur interpolation in Proposition \ref{prop:inner-approximation}, and the Moore--Penrose pseudoinverse in Proposition \ref{prop:correction}.

The authors have independently verified and understood all arguments. The final manuscript was written and organized entirely by the authors, who take full responsibility for its contents.

\section{Independent inner functions}
\label{sec:independent-inner-functions}

Given a pair of inner functions $(\varphi, \psi)$, both vanishing at the origin, let $\mu_{\varphi,\psi}$ be the pushforward measure on $\T^2$ determined by $$\mu_{\varphi,\psi}(F) := \int_{\T} F(\varphi(w),\psi(w)) d\sigma(w), \quad F \in C(\T^2).$$ Since $\varphi$ and $\psi$ are analytic, the Fourier coefficients of $\mu_{\varphi,\psi}$ are given by 
\begin{align}\label{eq:Fourier-joint-distribution}
  \int_{\T^2} \zeta^m \overline \eta^{n} d\mu_{\varphi,\psi} (\zeta, \eta)=
  \begin{cases}
    1, & m=n=0,\\
    \langle \varphi^m , \psi^n \rangle, & m, n \geq 1 \text{ or } m,n \leq -1\\
    0, & \text{otherwise}.
  \end{cases}
\end{align}
We denote by $\cal W$ the Wiener algebra on $\T^2$ consisting of all complex Borel measures whose Fourier coefficients are absolutely summable. Clearly, each $\mu \in \cal W$ is automatically absolutely continuous. Moreover, we shall always identify integrable functions with absolutely continuous Radon measures. 

Let $P_z$ be the Poisson kernel at $z\in \D$, and let $f,g$ be $H^\infty$-functions satisfying 
\begin{align}\label{eq:schur-function}
  f(0)=g(0)=0, \quad \|f\|_\infty < 1, \quad \|g\|_\infty < 1.
\end{align} 
The related \emph{Poisson balayage} $\rho_{f,g}$ is defined by
\begin{equation*}
  \rho_{f,g}(\zeta,\eta)
  :=\int_{\T}P_{f(z)}(\zeta)P_{g(z)}(\eta)d \sigma (z),\quad (\zeta, \eta) \in \T^2.
\end{equation*}
By Fubini's theorem and the Poisson integral formula, the Fourier coefficients of $\rho_{f,g}$ are given by
\begin{align}\label{eq:Fourier-coefficient-balayage}
  \int_{\T^2}\cramped{\zeta^m}\overline{\eta^n}\rho_{f,g} \dd \sigma_2
  =\begin{cases}
    1, & m=n=0;\\
    \ip{f^m}{g^n}, & m,n \geq 1;\\
    \overline{\ip{f^{-m}}{g^{-n}}}, & m,n \leq -1;\\
    0, & \text{otherwise}.
  \end{cases}
\end{align}
Note that $\rho_{f,g} \in \cal W$, since we have assumed both $\|f\|_\infty$ and $\|g\|_\infty$ are strictly less than one.

\subsection{An approximation lemma for joint distributions}

To prove the first part of Theorem \ref{thm:main}, it is equivalent to show there exist inner functions $\varphi$, $\psi$, vanishing at the origin, such that $\mu_{\varphi,\psi} = \sigma_2$ (the normalized Lebesgue measure on $\T^2$). Our first step is to show that the pushforward measure of a pair of inner functions can approximate any fixed Poisson balayage $\rho_{f,g}$ for $f$ and $g$ satisfying \eqref{eq:schur-function}.

\begin{proposition}
\label{prop:inner-approximation}
Let $f,g$ be $H^\infty$-functions satisfying \eqref{eq:schur-function}. Then for any $\varepsilon>0$ there
exist inner functions $\varphi,\psi$ with $\varphi(0)= \psi(0) = 0$, such that
\begin{align}\label{eq:inner-approximation}
  \mu_{\varphi,\psi}\in \cal W, \quad \norm{\mu_{\varphi,\psi}-\rho_{f,g}}_\cal W<\varepsilon. 
\end{align}
Moreover, $\varphi $ and $\psi$ can be chosen so that 
\begin{align}\label{eq:inner-approximation-norm-estimate}
  \norm{\varphi-f}_2^2 < 10 (1-\norm f_2^2),
  \qquad
  \norm{\psi-g}_2^2 < 10 (1-\norm g_2^2).
\end{align}
\end{proposition}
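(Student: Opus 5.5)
The plan is to realize $\varphi,\psi$ as compositions $\varphi = F\circ\beta$, $\psi = G\circ\beta$ with a common inner "randomizing" factor, or more naturally as perturbations of $f,g$ at a very fine scale. Concretely, the first idea is to take an inner function $u$ with $u(0)=0$ and set
\begin{equation*}
\varphi = f_N := \frac{f + b_N}{1+\overline{?}\,}\ \text{(schematically)},
\end{equation*}
but the cleaner route is via the Carath\'eodory--Schur interpolation, as the paper indicates. Since $\|f\|_\infty<1$, for each $N$ we can find an inner function $\varphi_N$ whose Taylor coefficients agree with those of $f$ up to order $N$ and which is built by the Schur algorithm: run the Schur recursion for $f$ for $N$ steps, and then replace the $N$-th Schur remainder by an inner function $\theta$ with $\theta(0)=0$. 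This yields $\varphi_N = T_N(z,\theta)$, a linear-fractional function of $\theta$ with coefficients analytic in $z$, and $\varphi_N$ is inner. Do the same for $g$ with an inner $\vartheta$. The key point is to choose $\theta(z)=\theta_0(z^{M})$ and $\vartheta(z)=\vartheta_0(z^{M})$ with $M\to\infty$ (e.g. $\theta_0=\vartheta_0$ being $z$, or better two genuinely "independent-looking" inner functions on a finer scale such as $z^M$ and $z^{M^2}$) so that high-frequency oscillation decouples from the slowly varying coefficients.

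The main computation: for a fixed $w\in\T$ near which the coefficients of $T_N$ are essentially constant, the values $(\varphi_N,\psi_N)$ are images of $(\theta,\vartheta)$ under the disk automorphisms $\zeta\mapsto T_N(w,\zeta)$, $\eta\mapsto S_N(w,\eta)$. If $(\theta,\vartheta)$ on a small arc around $w$ is asymptotically equidistributed on $\T^2$ (this is where choosing e.g. $\theta=z^M$, $\vartheta=z^{M^2}$, or $\vartheta = z^{M}\cdot$ a second scale, gives Weyl-type equidistribution), then the pushforward of Haar measure under an automorphism $\tau_a$ with $\tau_a(0)=a$ is the Poisson measure $P_a$. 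Hence locally the joint distribution is $P_{a(w)}\otimes P_{b(w)}$ with $a(w)=T_N(w,0)$, $b(w)=S_N(w,0)$, and averaging in $w$ gives $\rho_{a,b}$. Since $a,b$ are the Schur truncations of $f,g$ (which converge to $f,g$ uniformly on compacts, and in fact one can arrange $T_N(\cdot,0)\to f$ in $L^2$ with sup norm bounded below $1$), we get $\rho_{a,b}\to\rho_{f,g}$ in $\cal W$. To get $\cal W$-convergence rather than weak convergence, I would compare Fourier coefficients directly: by \eqref{eq:Fourier-joint-distribution} it suffices to control $\ip{\varphi^m}{\psi^n}-\ip{a^m}{b^n}$ uniformly summably in $m,n$; expanding $\varphi^m$ as a power series in $\theta$ with coefficients geometrically decaying (ratio $\le\max(\|a\|_\infty,\ldots)<1$) gives the summability, and for each fixed finite pattern the mixed terms $\theta^j\overline{\vartheta}^k$, $(j,k)\ne(0,0)$, are killed by orthogonality for large $M$ because they live on disjoint frequency bands.

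The estimate \eqref{eq:inner-approximation-norm-estimate} should come from $\|\varphi-f\|_2\le\|\varphi-a\|_2+\|a-f\|_2$ and $\|\varphi-a\|_2^2 \approx \int (1-|a|^2)\,d\sigma$ (the variance of $\tau_a(\theta)$ about its mean $a$, computed via Poisson: $\int|\zeta-a|^2P_a = 1-|a|^2$), so $\|\varphi-f\|_2^2\lesssim 1-\|f\|_2^2$ up to small errors, the constant $10$ leaving ample room. The main obstacle I expect is making the decoupling rigorous in $\cal W$-norm: ensuring uniform geometric decay of the coefficients of $\varphi^m$ in $\theta$ (which requires $\|a\|_\infty$ bounded away from $1$ uniformly in $N$; I would simply use the constant automorphism structure, e.g. replace the Schur scheme by $\varphi = \tau_{f}(\theta)=\frac{f+\theta}{1+\overline{f}\theta}$ — not analytic — and hence fall back to the interpolation version), so this is the step where I'd first try the simplest choice $\varphi=\frac{f+ z^M}{1+\tilde f z^M}$-type constructions with $\tilde f$ analytic approximating $\overline f$ on the relevant frequencies.
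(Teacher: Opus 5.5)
There is a genuine gap, and it sits exactly at the step you flag as the main obstacle. Your decoupling mechanism only gives weak-$*$ convergence of each fixed moment, and with the concrete randomizers you propose the conclusion $\mu_{\varphi,\psi}\in\mathcal{W}$ fails outright. The Schur parameters are numbers, so $T_N(z,w)$ is rational in $(z,w)$. Hence with $\theta=z^M$, $\vartheta=z^{M^2}$ (or $\theta=\vartheta=z^M$), the functions $\varphi=T_N(z,\theta)$ and $\psi=S_N(z,\vartheta)$ are finite Blaschke products. Then $z\mapsto(\varphi(z),\psi(z))$ maps $\mathbb{T}$ onto a real-analytic curve in $\mathbb{T}^2$, so $\mu_{\varphi,\psi}$ is singular with respect to $\sigma_2$, whereas every element of $\mathcal{W}$ is absolutely continuous. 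The case $f=g=0$ makes this explicit. All Schur parameters vanish, so $\varphi=z^{N+M}$ and $\psi=z^{N+M^2}$. Then $\langle\varphi^m,\psi^n\rangle=1$ whenever $m(N+M)=n(N+M^2)$, which happens for infinitely many $(m,n)$, so $\|\mu_{\varphi,\psi}-1\|_{\mathcal{W}}=\infty$ for every $M$. The faulty step is interchanging $M\to\infty$ with an infinite sum. After expanding $\tau^m=\sum_k a_{m,k}w^k$, the $\mathcal{W}$-error involves $\int a_{m,k}\overline{b_{n,\ell}}\,\theta^k\overline{\vartheta}^{\,\ell}\,d\sigma$ for all $(k,\ell)\neq(0,0)$. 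Resonances such as $Mk=M^2\ell$ occur at every scale. Moreover, after summing in $m$ there is no decay in $k$: Cauchy estimates give only $\sum_m\|a_{m,k}\|_\infty\le C(k+1)$. So ``each fixed pattern is killed for large $M$'' never yields a summable bound. Weyl equidistribution of $(z,z^M,z^{M^2})$ is true, but it controls finitely many Fourier coefficients at a time, not their $\ell^1$ norm.

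What you are missing is a randomizer with a quantitative decorrelation estimate. It must be uniform over all $(k,\ell)\neq(0,0)$, decay fast enough to absorb the factor $(k+1)(\ell+1)$, and have a constant that can be made small. Already the case $f=g=0$ of the proposition asks for a pair of inner functions that is $\varepsilon$-independent in $\mathcal{W}$-norm. No pair that is smooth on some arc of $\mathbb{T}$ can deliver this, so this is the heart of the matter rather than a technicality. The paper inserts two independent Poisson random singular inner functions $S_1,S_2$ (intensity $\lambda\,d\sigma\,a^{-1-\beta}da$) in place of $w$. The exponential formula gives $|\mathbb{E}[S(e^{it})^m\overline{S(e^{is})^m}]|\le\exp(-c_\beta\lambda m^\beta\delta(t,s)^{1-\beta})$, which yields Lemma~\ref{lem:oscillation}: $\mathbb{E}|\int b\,S_1^k\overline{S_2}^{\,\ell}\,d\sigma|^2\le C\|b\|_\infty^2[\lambda(k^\beta+\ell^\beta)]^{-1/(1-\beta)}$. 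With $\beta=15/16$ and $\lambda$ large, the expected $\mathcal{W}$-error is then $O(\lambda^{-1/(2(1-\beta))})$. Your interpolation step is fine in spirit. The paper implements it more cleanly: for a polynomial $f$ of degree $<N$ it uses $\tau_f(z,w)=\frac{f(z)-z^{2N}w}{1-\overline{f(1/\bar z)}z^{2N}w}$. This gives $\tau_f(z,0)=f$ exactly, Cauchy-estimate constants depending only on $\|f\|_\infty$, and $\varphi-f\in z^NH^2$, so $\|\varphi-f\|_2^2=1-\|f\|_2^2$ holds exactly. The general case then follows by Ces\`aro approximation.
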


The proof of Proposition \ref{prop:inner-approximation} requires a random construction of inner functions using the \emph{Poisson point process}. All the results on Poisson point processes used in this note are classical and can be found in \cite{LastPenrose}. We do not review the general theory here, but have chosen a presentation intended to be accessible to readers who are unfamiliar with Poisson point processes.

Let $\Pi $ be a Poisson random measure on $\T\times(0,\infty)$
with intensity
\[
    d\nu(\zeta,a)
    =\lambda\,\dd \sigma(\zeta)\frac{da}{a^{1+\beta}},
    \qquad \lambda>0, \quad 0<\beta<1.
\] 
In particular, the following equivalence will be useful: for any non-negative measurable function $H$ on $\T\times (0,\infty)$, it holds that (see, e.g., \cite[Proposition 12.1]{LastPenrose})
\begin{equation}\label{eq:poisson-integrability}
  \int H\dd \Pi<\infty\,\,\text{almost surely}
  \quad \Longleftrightarrow \quad \int(1\wedge H)\dd\nu<\infty.
\end{equation}

The equivalence \eqref{eq:poisson-integrability} implies that, almost surely, the positive linear functional $$h\mapsto \int_{\T \times (0,\infty)} a h(\zeta) \dd \Pi (\zeta,a)$$ is bounded on $C(\T)$ and thus  induces a random Borel measure $\widetilde{\Pi}$ on $\T$ such that $$\int_\T h \dd\widetilde{\Pi} =\int_{\T \times (0,\infty)} a h(\zeta) \dd \Pi (\zeta,a).$$ Note that $\widetilde{\Pi}$ is also a positive random measure concentrated on a countable set. 

Next, we consider the singular inner function associated with
$\widetilde{\Pi}$, which we call a \emph{Poisson random inner function}. This random inner function has some interesting function-theoretic properties (see Lemma \ref{lem:oscillation} below), and we believe that it is worthy of further study.

\begin{definition}
Let $\widetilde{\Pi}$ be the random measure on $\T$ defined above. The associated random singular inner function $$S_{\widetilde{\Pi}}(z) := \exp\Big( - \int_{\T} \frac{\zeta+z}{\zeta-z} \dd \widetilde{\Pi}\Big), \quad z\in \D,$$ is called a Poisson random inner function.
\end{definition} 

The proof of Proposition \ref{prop:inner-approximation} requires the following properties of $S_{\widetilde{\Pi}}$.
\begin{lemma}\label{lem:oscillation}
Let $\beta\in (0,1)$ and $\lambda>0$. Let $S_1$ and $S_2$ be two independent copies of $S_{\widetilde{\Pi}}$. Then there exists a constant $C>0$, depending only on $\beta$, such that 
\begin{equation}\label{eq:oscillation}
  \E\bigg[ \bigg|\int_{\T}b \, \cramped{S_1^m} \overline{S_2^n}\dd \sigma \bigg|^2 \bigg]
  \leq C \|b\|_\infty^2  \, \big[ \lambda (m^\beta + n^\beta)\big]^{-\frac{1}{1-\beta}}
\end{equation}
for every (deterministic) $b\in L^\infty(\T)$ and all integers $m,n \geq 0$ with $(m,n)\neq (0,0)$.
\end{lemma}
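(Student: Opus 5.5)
Expanding the square turns the left-hand side of \eqref{eq:oscillation} into a double integral:
\[
\E\bigg[\Big|\int_\T b\, S_1^m\overline{S_2^n}\dd\sigma\Big|^2\bigg]
=\int_\T\int_\T b(z)\overline{b(w)}\,\E\big[S_1^m(z)\overline{S_1^m(w)}\big]\,\E\big[\overline{S_2^n(z)}S_2^n(w)\big]\dd\sigma(z)\dd\sigma(w).
\]
Here we use Fubini and the independence of $S_1,S_2$; the integrand is bounded, so Fubini is harmless. Since $|b(z)\overline{b(w)}|\le\|b\|_\infty^2$, it suffices to bound the integral of the product of the absolute values of the two expectations. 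Both expectations depend on the pair $(z,w)$ only through $z\overline w$, by rotation invariance of $\nu$. So the problem reduces to computing the \emph{two-point characteristic functional} of the Poisson random measure.

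\textbf{Step 1: exact formula via Campbell.} For boundary points $z,w\in\T$, the inner function $S=S_{\widetilde\Pi}$ is a pure atomic singular inner function. Formally, $S^m(z)\overline{S^m(w)}=\exp\big(-\int a\,m[\,c(\zeta,z)+\overline{c(\zeta,w)}\,]\dd\Pi\big)$ with $c(\zeta,z)=\frac{\zeta+z}{\zeta-z}$. On $\T$ this kernel is purely imaginary: $c(\zeta,e^{it})=i\cot\frac{s-t}{2}$ for $\zeta=e^{is}$. To avoid convergence issues, I would work at $rz,rw$ with $r<1$ and let $r\to1$ at the end; dominated convergence is fine because everything is bounded by $1$ and $S$ has radial limits a.e. The exponential formula for Poisson processes ($\E e^{-\int h\,d\Pi}=\exp(-\int(1-e^{-h})d\nu)$ for complex $h$ with $\mathrm{Re}\,h\ge0$) then gives
\[
\big|\E[S^m(z)\overline{S^m(w)}]\big|=\exp\Big(-\lambda\int_\T\int_0^\infty \mathrm{Re}\big(1-e^{-im a\,\theta(\zeta)}\big)\frac{da}{a^{1+\beta}}\dd\sigma(\zeta)\Big),
\]
where $\theta(\zeta)=\cot\frac{s-t}{2}-\cot\frac{s-u}{2}$ for $z=e^{it}$, $w=e^{iu}$. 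The $a$-integral equals $c_\beta\, m^\beta|\theta(\zeta)|^\beta$ by scaling, with $c_\beta=\int_0^\infty(1-\cos x)x^{-1-\beta}dx$. Hence
\[
\big|\E[\cdots]\big|=\exp\big(-\lambda c_\beta m^\beta\, I(t-u)\big),\qquad I(\delta):=\int_\T|\theta|^\beta\dd\sigma.
\]

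\textbf{Step 2: lower bound on $I(\delta)$.} The difference of cotangents is $\theta=\frac{\sin\frac{t-u}{2}}{\sin\frac{s-t}{2}\sin\frac{s-u}{2}}$, so $|\theta|\ge|\sin\frac{\delta}{2}|$ everywhere. This gives $I(\delta)\ge c|\delta|^\beta$ for $|\delta|\le\pi$. (The singularities near $s=t$ only help; the crude bound suffices.) Combining the two factors, with $\delta=t-u$, the integrand is at most $\|b\|_\infty^2\exp(-c'\lambda(m^\beta+n^\beta)|\delta|^\beta)$.

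\textbf{Step 3: integrate.} Integrating over $\delta\in[-\pi,\pi]$ gives
\[
\int_{-\pi}^{\pi}\exp\big(-c'\lambda(m^\beta+n^\beta)|\delta|^\beta\big)\frac{d\delta}{2\pi}\le C\big[\lambda(m^\beta+n^\beta)\big]^{-1/\beta}.
\]
This decay exponent $1/\beta$ does not match the stated $\frac1{1-\beta}$. So the crude bound in Step 2 is probably too weak, and I must exploit the singular part of $\theta$.

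This is the main obstacle. For small $\delta$, the set where $|\theta|$ is large is the region where $\zeta$ lies at distance $\lesssim\delta$ from $t$ or $u$. The integral $\int|\theta|^\beta$ there behaves like $\int_\delta \big(\delta/x^2\big)^\beta dx\sim \delta^{1-\beta}$ for $\beta<1$ (integrating $x^{-2\beta}$ from $\delta$), together with the region $x<\delta$ where $|\theta|\sim 1/x$, which contributes $\delta^{1-\beta}$. Thus the correct estimate is $I(\delta)\asymp|\delta|^{1-\beta}$ when $\beta>1/2$; in general I would prove $I(\delta)\ge c_\beta|\delta|^{1-\beta}$ by restricting to $|s-t|\le|\delta|/2$, where $|\theta|\gtrsim 1/|s-t|$. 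Repeating Step 3 with this bound gives decay $[\lambda(m^\beta+n^\beta)]^{-1/(1-\beta)}$, as claimed.

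Two points need care: the $r\to1$ justification, and the requirement that constants depend only on $\beta$. For $(m,n)$ with one index zero, only one factor is present and the same argument applies.
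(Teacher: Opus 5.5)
Your proposal is correct and is essentially the paper's proof: the same independence/Fubini expansion, the exponential formula giving $\exp(-c_\beta\lambda m^\beta J_\beta)$, the lower bound $J_\beta\gtrsim\delta^{1-\beta}$ from the region within about $\delta/2$ of one singularity of the cotangent difference (your crude bound $|\theta|\ge|\sin\frac{\delta}{2}|$ is an unnecessary detour), and the same final integral in $\delta$. The one real difference is that you reach the boundary by letting $r\to1$ instead of using the principal-value formula and \eqref{eq:poisson-integrability}; the limit on the $\nu$-side then needs more than boundedness by $1$, but Fatou's lemma applied to the nonnegative integrands $\Rea\bigl(1-e^{-h_r}\bigr)$, with $h_r:=ma\bigl[c(\zeta,rz)+\overline{c(\zeta,rw)}\bigr]$, gives exactly the upper bound on the modulus you need.
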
 

Let us begin by proving Proposition \ref{prop:inner-approximation} on the assumption that Lemma \ref{lem:oscillation} holds, and then proceed to prove Lemma \ref{lem:oscillation} in Section \ref{sect:random-inner-function}.

\begin{proof}[Proof of Proposition \ref{prop:inner-approximation}]

We first prove the case where both $f$ and $g$ are polynomials of degree less than $N$ and satisfy \eqref{eq:schur-function}. To obtain the desired inner functions whose joint distribution is close to $\rho_{f,g}$, the idea is to construct inner functions $\varphi$ and $\psi$ such that 
\begin{itemize} 
  \item their first $N$ Taylor coefficients agree with those of $f$ and $g$ respectively; 
  \item the contribution of the remaining Taylor coefficients to the error $\big| \ip{\varphi^m}{\psi^n} - \ip{f^m}{g^n}\big|$ is as small as possible.
\end{itemize}
The first point can be achieved by following Carath\'eodory--Schur interpolation \cite{Garnett2007}, while the second requires an application of Lemma \ref{lem:oscillation}.

Now for polynomials $f$ and $g$ of degree less than $N$, we consider
$$
  \tau_f(z,w):=\frac{f(z)-z^{2N}w}{1-\overline{f(1/\overline{z})}z^{2N}w},\qquad \tau_g(z,w):=\frac{g(z)-z^{2N}w}{1-\overline{g(1/\ol z)}z^{2N}w}.
$$
Both $\tau_f$ and $\tau_g$ are rational inner functions in two variables \cite{Rudin1969}, and their powers admit the following series expansions:
\begin{align}\label{eq:series-expansion-tau}
  \tau_f^m(z,w)=\sum_{k\geq0}a_{m,k}(z)w^k, \qquad \tau_g^m(z,w)=\sum_{k\geq0}b_{m,k}(z)w^k,
\end{align}
which converge absolutely on $\overline{\D}\times\overline{\D}$. We claim that there exists a positive constant $C$, depending only on $\|f\|_\infty$ and $\|g\|_\infty$, such that 
\begin{align}\label{eq:consequence-Cauchy-estimate}
  \sum_{m\geq 1} \|a_{m,k}\|_\infty \leq C (k+1), \quad \sum_{m\geq 1} \|b_{m,k}\|_\infty \leq C (k+1).
\end{align}

The verification of \eqref{eq:consequence-Cauchy-estimate}
only requires elementary Cauchy estimates. Since $a_{m,0} = f^m$, $b_{m,0} = g^m$, we only need to consider the case $k\geq 1$. For any fixed $z\in\T$, applying Cauchy's coefficient formula for $\tau_f(z,\cdot)^m$ gives $$
  a_{m,k}(z)
  =\frac{1}{2\pi i}\int_{|w|=r}
    \frac{\tau_f(z,w)^m}{w^{k+1}}\,dw.
$$
Since for each fixed $z$, $\tau_f(z,\cdot)$ is an automorphism of $\ol{\D}$, one observes that
$$
  R:= \sup_{z\in \T}\sup_{|w|=1/2}|\tau_f(z,w)|
  \leq \frac{\|f\|_\infty+1/2}{1+\|f\|_\infty/2}<1.
$$
By setting $r=1$ and $r=1/2$ in Cauchy's coefficient formula, we obtain 
$$
  \|a_{m,k}\|_\infty \leq \min\{ 1, 2^k R^m\}
$$
and hence 
\[
\begin{aligned}
  \sum_{m\geq1}\|a_{m,k}\|_\infty \leq  \sum_{2^k R^m \geq 1} 1 + \sum_{2^k R^m <1} 2^k R^m \leq C(R)k.
\end{aligned}
\]
Applying the same argument to $g$ proves the claim \eqref{eq:consequence-Cauchy-estimate}.

Now let $S_1$ and $S_2$ be two independent copies of the Poisson random inner function as stated in Lemma \ref{lem:oscillation}, and put 
\begin{align}\label{eq:random-inner-approximation}
  \varphi(z):=\tau_f(z,S_1(z)),
  \qquad
  \psi(z):=\tau_g(z,S_2(z)).
\end{align}
It is clear that $\varphi$ and $\psi$ are random inner functions, $\varphi(0)=\psi(0)=0$, and 
$$
  \varphi-f
  =
  z^N S_1\frac{ff^\sharp-z^N}{1-f^\sharp z^N S_1}
  \in z^{N} H^2; 
$$ here $f^\sharp(z)=\ol{f(1/\ol z)} z^N$, which is an analytic polynomial. Now, since $\operatorname{deg} f<N$, one has $\varphi-f \perp f$. It also holds that $\psi-g\perp g$. Therefore,
\begin{align}\label{eq:polynomial-orthogonality}
  \|\varphi - f\|^2_2 = 1-\|f\|^2_2 \text{\, and \,}
  \|\psi-g\|_2^2=1-\|g\|_2^2 \,\,\text{ almost surely}.
\end{align}
Noting that $a_{m,0}=f^m$, $b_{n,0}=g^n$ in \eqref{eq:series-expansion-tau}, we see $$\Delta_{m,n} := \ip{\varphi^m}{\psi^n} - \ip{f^m}{g^n} = \sum_{\substack{k,\ell \geq 0 \\ (k,\ell)\neq (0,0)}} \int_{\T}
  a_{m,k}\overline{b_{n,\ell}}\,
  S_1^k\overline{S_2}^{\,\ell}\,\dd\sigma.$$
  By Lemma \ref{lem:oscillation}, 
  \begin{align*} 
    \E |\Delta_{m,n}| \leq & \sum_{\substack{k,\ell \geq 0 \\ (k,\ell)\neq (0,0)}} \E \bigg| \int_\T a_{m,k} \ol{b_{n,\ell}} S_1^k \ol{S_2}^\ell \dd \sigma \bigg| \\ \leq & \sum_{\substack{k,\ell \geq 0 \\ (k,\ell)\neq (0,0)}} \bigg[\E \bigg|  \int_\T a_{m,k} \ol{b_{n,\ell}} S_1^k \ol{S_2}^\ell \dd \sigma \bigg|^2 \bigg]^{1/2} \\ 
    \lesssim & \sum_{\substack{k,\ell \geq 0 \\ (k,\ell)\neq (0,0)}} \|a_{m,k}\|_\infty \|b_{n,\ell}\|_\infty \big[ \lambda (k^\beta + \ell^{\beta})\big]^{-\frac{1}{2(1-\beta)}}
  \end{align*} 
  Summing over $m,n\geq 1$, and then using \eqref{eq:Fourier-joint-distribution}, \eqref{eq:Fourier-coefficient-balayage} and \eqref{eq:consequence-Cauchy-estimate}, we obtain 
\begin{align*}
  \E\|\mu_{\varphi,\psi} - \rho_{f,g}\|_\cal W = & 2 \sum_{m,n\geq 1 }  \E |\Delta_{m,n}| 
  \lesssim  \sum_{m,n\geq 1 } \sum_{\substack{k,\ell \geq 0 \\ (k,\ell)\neq (0,0)}} \|a_{m,k}\|_\infty \|b_{n,\ell}\|_\infty \big[ \lambda (k^\beta + \ell^{\beta})\big]^{-\frac{1}{2(1-\beta)}}\\
  = & \sum_{\substack{k,\ell \geq 0 \\ (k,\ell)\neq (0,0)}} \big[ \lambda (k^\beta + \ell^{\beta})\big]^{-\frac{1}{2(1-\beta)}}  \bigg( \sum_{m\geq 1}  \|a_{m,k}\|_\infty \bigg) \bigg( \sum_{n\geq 1}\|b_{n,\ell}\|_\infty \bigg)\\
  \lesssim & \sum_{\substack{k,\ell \geq 0 \\ (k,\ell)\neq (0,0)}} (k+1) (\ell+1) \big[ \lambda (k^\beta + \ell^{\beta})\big]^{-\frac{1}{2(1-\beta)}}.
\end{align*}
As long as $\beta$ is sufficiently close to $1$ (one can henceforth fix $\beta = 15/16$), then $$\sum_{\substack{k,\ell \geq 0 \\ (k,\ell)\neq (0,0)}} (k+1)( \ell+1) \big[  (k^\beta + \ell^{\beta})\big]^{-\frac{1}{2(1-\beta)}} < \infty.$$ In particular, for sufficiently large $\lambda$, $$\E\|\mu_{\varphi,\psi} - \rho_{f,g}\|_\cal W \leq C(f,g,\beta) \lambda^{-\frac{1}{2(1-\beta)}} < \varepsilon.$$  Thus there exist two inner functions satisfying \eqref{eq:inner-approximation}, in the case where $f$ and $g$ are fixed polynomials.

Now for the general $f$ and $g$ satisfying \eqref{eq:schur-function}, we consider their Ces\`aro means $f_N$ and $g_N$. It is routine to verify $\rho_{f_N, g_N} \to \rho_{f,g}$ in $\cal W$ as $N\to \infty$. Thus  one can fix a sufficiently large $N$ so that $\|\rho_{f_N,g_N} - \rho_{f,g}\|_\cal W <\varepsilon/2$ and $$\|f_N -f \|_2< \sqrt{1- \|f\|^2_2} \quad 1-\|f_N\|^2_2< 2 (1-\|f\|_2^2).$$ Let $\varphi$, $\psi$ be two inner functions constructed as above for polynomials $f_N$ and $g_N$. Then $\|\mu_{\varphi,\psi} - \rho_{f, g}\|_\cal W < \varepsilon$ and (by \eqref{eq:polynomial-orthogonality}) 
\begin{align*} 
  \|\varphi - f\|_2 & \leq \|\varphi-f_N\|_2 + \|f_N - f\|_2\\
  & =\sqrt{1-\|f_N\|_2^2}+\|f_N - f\|_2< 3 \sqrt{1-\|f\|^2_2}.
\end{align*} Similarly, $\|\psi - g\|_2 < 3 \sqrt{1-\|g\|^2_2}$. The proof of Proposition \ref{prop:inner-approximation} is completed.
\end{proof}

\subsection{A superlinear perturbation}
\label{sec:correction}

Recall that Proposition \ref{prop:inner-approximation} guarantees that for any given $f,g$ satisfying \eqref{eq:schur-function}, the Poisson balayage $\rho_{f,g}$ can be approximated by $\mu_{\varphi,\psi}$ in $\cal W$ for some pair of inner functions. The next key proposition shows that, for each $M=M(\varepsilon)$ as specified below, the pair $(\varphi,\psi)$ admits a perturbation of size $O(M^2\varepsilon)$ in \(H^2\) that reduces the error in the first $M\times M$ mixed moments from linear to
superlinear order. The precise statement is as follows.

\begin{proposition}
\label{prop:correction}
Let $0<\varepsilon<e^{-2026}$ and let $M:=M(\varepsilon)=\lfloor \varepsilon^{-1/2026}\rfloor$. Then for any given inner functions $\varphi,\psi$ satisfying $$\varphi(0)=\psi(0) = 0, \quad \norm{\mu_{\varphi,\psi}-1}_\cal W < \varepsilon,$$ there exist small perturbations $h,k\in zH^\infty$ such that
\begin{align}\label{eq:perturbation-bounds}
  \norm h_2+\norm k_2\leq CM^2\varepsilon, \quad \norm h_\infty,\norm k_\infty\leq\sqrt\varepsilon,
\end{align}
and
\begin{equation}\label{eq:finite-correction}
  \sum_{1\leq m,n\leq M}
  \abs{\ip{(\varphi+h)^m}{(\psi+k)^n}}
  \leq CM^{7}\varepsilon^{3/2}.
\end{equation}
Here the constant $C$ is universal.
\end{proposition}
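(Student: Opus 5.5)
We will linearize the map $(h,k)\mapsto \big(\ip{(\varphi+h)^m}{(\psi+k)^n}\big)_{1\le m,n\le M}$ at $(0,0)$ and solve the linearized system by a least-norm (pseudoinverse) choice of $h,k$. Write $c_{m,n}:=\ip{\varphi^m}{\psi^n}$. The hypothesis $\norm{\mu_{\varphi,\psi}-1}_{\cal W}<\varepsilon$ gives $\sum_{m,n\ge1}\abs{c_{m,n}}<\varepsilon/2$. Expanding binomially,
\[
\ip{(\varphi+h)^m}{(\psi+k)^n}=c_{m,n}+m\ip{\varphi^{m-1}h}{\psi^n}+n\ip{\varphi^m}{\psi^{n-1}k}+R_{m,n},
\]
where $R_{m,n}$ collects the terms of total degree at least $2$ in $(h,k)$. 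We look for $h$ and $k$ in the finite-dimensional spaces
\[
h=\sum_{1\le p,q\le M}\alpha_{p,q}\,\varphi^{p}\overline{\psi^{q}}\ \text{(projected)},\qquad k=\sum\beta_{p,q}\,\psi^{q}\overline{\varphi^{p}}\ \text{(projected)}.
\]
Concretely, we take $h:=-\sum_{m,n}\frac{c_{m,n}}{m}P_+\big(\varphi\,\psi^n\overline{\varphi^{m}}\big)$-type corrections, where $P_+$ is the Riesz projection onto $zH^2$.

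The key structural fact is that the system $\{\varphi^a\psi^b\}$ is almost orthonormal. Its Gram matrix differs from the identity by entries $\ip{\varphi^a\psi^b}{\varphi^{a'}\psi^{b'}}$, which are Fourier coefficients of $\mu_{\varphi,\psi}$ and hence $O(\varepsilon)$ in $\ell^1$. Therefore the linear map $L(h,k)_{m,n}=m\ip{\varphi^{m-1}h}{\psi^n}+n\ip{\varphi^m}{\psi^{n-1}k}$ has an explicit approximate right inverse. Taking $h=-\sum_{m,n}\frac{c_{m,n}}{m}\,\varphi\,\cdot(\text{the }H^2\text{-projection of }\psi^n\overline{\varphi^m})$, one computes that $\ip{\varphi^{m-1}h}{\psi^n}=-c_{m,n}/m+O(\varepsilon\cdot\|c\|_{\ell^1})$. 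A cleaner version is to take $h$ in the span of the functions $v_{m,n}:=P_+(\overline{\varphi^{m-1}}\psi^n)$ and solve $G\alpha=-c$ exactly, where $G$ is the $M^2\times M^2$ Gram-type matrix $G=(m\ip{\varphi^{m-1}v_{m',n'}}{\psi^n})$. This matrix is a perturbation of $\operatorname{diag}(m)\cdot(\text{near-identity})$ with error $O(M^2\varepsilon)$ in operator norm. It is therefore invertible, with $\norm{G^{-1}}\lesssim1$ (the Moore--Penrose step), since $M^2\varepsilon\ll1$ when $M=\lfloor\varepsilon^{-1/2026}\rfloor$. We set $k=0$, or split symmetrically. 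This yields $\norm h_2\lesssim\sum\abs{\alpha}\lesssim M^2\varepsilon$, and since each $v_{m,n}$ has $\|\cdot\|_\infty$ controlled only after projection, we must bound $\norm{h}_\infty$ separately.

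For the sup-norm bound, $P_+$ is unbounded on $L^\infty$, so we will instead use the Fourier-side description: $\overline{\varphi^{m-1}}\psi^n$ has an absolutely convergent expansion only through $\mu$, not pointwise. To fix this, we replace $v_{m,n}$ by bounded substitutes, for example $v_{m,n}=\varphi\psi^n\cdot$ a polynomial in $\varphi,\psi$, or more simply $v_{m,n}:=\psi^n\varphi^{j}$ chosen so that $\ip{\varphi^{m-1}v}{\psi^{n}}$ picks out the right moment. This gives $\norm{v}_\infty\le1$, and hence $\norm h_\infty\le\sum\abs{\alpha}\lesssim M^2\varepsilon\le\sqrt\varepsilon$. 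We also need $h,k\in zH^\infty$, which holds if all $v$ vanish at $0$.

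Finally, we bound the remainder. Every term in $R_{m,n}$ has the form $\binom{m}{i}\binom{n}{j}\ip{\varphi^{m-i}h^i}{\psi^{n-j}k^j}$ with $i+j\ge2$, and its absolute value is at most $\binom mi\binom nj\norm h_\infty^{i+j-2}\norm h_2\norm k_2$ (or $\norm h_2^2$, etc.). Using $\norm h_\infty\le\sqrt\varepsilon$ and $M\sqrt\varepsilon\ll1$, the sum over $i,j$ is $\lesssim M^2\,(M^2\varepsilon)^2\cdot M^{O(1)}$, which is far below $M^7\varepsilon^{3/2}$. The linear error coming from the inexact inversion is $O(M^2\varepsilon\cdot\varepsilon)\cdot M^{3}$. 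Summing over the $M^2$ indices gives the bound $CM^7\varepsilon^{3/2}$.

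The main obstacle is the second step: choosing test functions $v$ that are simultaneously bounded, vanish at $0$, and make the linearized matrix close to an invertible diagonal with controlled inverse. I would first try $v_{m,n}=\varphi\,\psi^n\overline{\varphi^{m}}$ projected, which is exact in $H^2$. Failing that, I would accept the $\norm{\cdot}_\infty\le\sqrt\varepsilon$ loss coming from a smoothed projection, which is presumably where the $\sqrt\varepsilon$ and the exponent $3/2$ come from.
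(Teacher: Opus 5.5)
Your linearization and remainder bound are fine, but the core step, inverting the linearized system with a small bounded $h$, fails. With $k=0$, your matrix $G$ is $\operatorname{diag}(m)$ times the Gram matrix of $v_{m,n}=P_+(\overline{\varphi}^{\,m-1}\psi^n)$. Near-orthonormality of $\{\overline{\varphi}^{\,m-1}\psi^n\}$ in $L^2$ says nothing about the projected family, because $P_+$ can discard all of the mass.

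Concretely, take $\varphi=\psi I$ with $(I,\psi)$ any admissible pair. This is again admissible: the map $(\zeta,\eta)\mapsto(\zeta\eta,\eta)$ only permutes Fourier coefficients, so $\norm{\mu_{\psi I,\psi}-1}_{\mathcal W}=\norm{\mu_{I,\psi}-1}_{\mathcal W}$. For $n\le m-1$ we get $\overline{\varphi}^{\,m-1}\psi^n=\overline{I^{m-1}\psi^{m-1-n}}$, an anti-analytic function. Hence $v_{m,n}=0$ for all $m\ge 2$ and $1\le n\le m-1$. So $G$ has zero rows, and no bound $\norm{G^{-1}}\lesssim 1$ can hold. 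In operator terms, the adjoint of $h\mapsto\bigl(m\ip{\varphi^{m-1}h}{\psi^n}\bigr)$ is $c\mapsto P_+\bigl(\sum m c_{m,n}\overline{\varphi}^{\,m-1}\psi^n\bigr)$, and this is not bounded below.

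Your bounded substitutes fail for a different reason. For $v=\varphi^j\psi^l$ and $m\ge 2$, the quantity $\ip{\varphi^{m-1}v}{\psi^n}$ is either $0$ or a Fourier coefficient $\ip{\varphi^{m-1+j}}{\psi^{n-l}}$ of size $O(\varepsilon)$. The same happens on the $k$-side for $n\ge 2$. So entries with $m,n\ge 2$ cannot be corrected by small polynomial perturbations in $\varphi,\psi$. The correctors must genuinely involve functions of the type $P_+(\overline{\varphi}^{\,m-1}\psi^n)$, which are unbounded in general.

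Two ideas from the paper's proof are missing.

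\textbf{Use both $h$ and $\overline{k}$.} Write $T^*c=(PAc,\overline{P}Bc)$ with $Bc=\sum n c_{m,n}\overline{\varphi}^{\,m}\psi^{n-1}$. The key observation is that $\varphi\psi Bc$ is built from the same monomials as $Ac$. Hence $\abs{\ip{Ac}{\varphi\psi Bc}}\ge(1-M^4\varepsilon)\norm{c}_{HS}^2$. Meanwhile the term $\ip{(I-P)Ac}{\varphi\psi(I-\overline{P})Bc}$ vanishes, since it pairs an element of $\overline{H^2}$ with an element of $zH^2$. Cauchy--Schwarz then yields $\norm{T^*c}_2\gtrsim M^{-2}\norm{c}_{HS}$. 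This gives a least-norm solution $(h_0,\overline{k_0})$ of $T(h,\overline{k})=-Y$ with $\norm{h_0}_2^2+\norm{k_0}_2^2\lesssim M^4\varepsilon^2$.

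\textbf{Get boundedness afterwards by truncation, not by the choice of test functions.} Set $\widetilde h=h_0/(1+H/\sqrt\varepsilon)$, where $H$ is the Herglotz integral of $\abs{h_0}$. Then $\abs{\widetilde h}\le\min\{\sqrt\varepsilon,\abs{h_0}\}$. By $L^2$-boundedness of the Hilbert transform, $\norm{h_0-\widetilde h}_1\le\varepsilon^{-1/2}\norm{h_0}_2\norm{H}_2\lesssim\varepsilon^{-1/2}\norm{h_0}_2^2$. Each entry of $T$ is at most $M$ times an $L^1$ norm, so the linear error is $\lesssim M^5\varepsilon^{3/2}$ per entry. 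Summing over $M^2$ entries is exactly where $CM^7\varepsilon^{3/2}$ comes from.

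Your closing remark about a smoothed projection points in this direction but supplies no mechanism. Without one, your claimed linear error $O(M^5\varepsilon^2)$ is unsupported.
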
 

The proof of Proposition \ref{prop:correction} will be carried out in the framework of the \emph{Moore--Penrose inverse}, combined with elementary function-theoretic estimates in Hardy spaces. To make the underlying idea more transparent, we divide the proof of
Proposition~\ref{prop:correction} into three steps: 
\begin{itemize}
    \item For small bounded analytic
    perturbations $h,k\in zH^\infty$, we write
    \begin{align}\label{eq:perturbation-equation}
        \ip{(\varphi+h)^m}{(\psi+k)^n}
        = \ip{\varphi^m}{\psi^n}+ 
        \underbrace{m\ip{\varphi^{m-1}h}{\psi^n}+n\ip{\varphi^m}{\psi^{n-1}k}}_{\text{denoted by }T_{m,n}(h,\ol{k})}
        +R_{m,n}(h,\ol{k}).
    \end{align}
    The remainders $R_{m,n}(h,\ol{k})$ can be well-controlled by Taylor's theorem with integral remainder. Thus we only need to seek a good perturbation to make the sum $\ip{\varphi^m}{\psi^n} + T_{m,n}(h,\ol{k})$ as small as possible.

    \item
    We then consider the linear operator
    \begin{align}\label{eq:definition-T}
        T:zH^2\oplus\overline{zH^2}
        \longrightarrow \operatorname{Mat}_M(\C); \quad (h,\ol{k})\mapsto \big[T_{m,n}(h,\ol{k}) \big]_{M\times M},
    \end{align} 
    and apply the Moore--Penrose inverse to obtain the minimal $L^2$-norm solution $(h_0,\ol{k_0})$ of the equation $T(h,\ol{k})= - Y,$ where $Y := [\ip{\varphi^m}{\psi^n}]_{M\times M}$.

    \item The good news is that $Y+T(h_0,\ol{k_0})$ vanishes, but the bad news is $h_0$ or $k_0$ may not be bounded. To obtain the final $H^\infty$-perturbations, we will apply an analytic truncation trick using the Hilbert transform.
\end{itemize}

We now turn to the details.

\begin{proof}[Proof of Proposition \ref{prop:correction}] Let $T$ be the linear operator defined in \eqref{eq:perturbation-equation} and \eqref{eq:definition-T}, and denote by $\|\cdot \|_{HS}$ the Hilbert-Schmidt norm on $\operatorname{Mat}_M(\C)$.

\medskip
\noindent\textit{Step 1.} For $0\leq t\leq1$, an elementary calculation shows 
\begin{align*}
  \frac{\dd^2}{\dd t^2}
      \ip{(\varphi+t h)^m}{(\psi+t k)^n}
  =&m(m-1) \ip{h^2 (\varphi+th)^{m-2}}{(\psi+tk)^n}\\
  & + 2mn \ip{h(\varphi+th)^{m-1}}{k (\psi+tk)^{n-1}}\\
  & + n(n-1) \ip{(\varphi+t h)^{m}}{k^2 (\psi+tk)^{n-2}}.
\end{align*}
Substituting the trivial bounds $m,n \leq M$ and $$\|\varphi + th\|_\infty \leq 1+\|h\|_\infty,\quad \|\psi + tk\|_\infty \leq 1+\|k\|_\infty,$$ and then combining with the mean-value theorem, one can deduce that $$|R_{m,n}(h,\ol{k})| \lesssim M^2 (1+\|h\|_\infty)^M (1+\|k\|_\infty)^M (\|h\|_2^2 + \|k\|_2^2)$$ Note that $(1+\sqrt{\varepsilon})^M \leq e^2$. Thus for those perturbations satisfying \eqref{eq:perturbation-bounds}, it always holds that 
\begin{align}\label{eq:remainder-estimate}
  \sum_{1\leq m,n \leq M} |R_{m,n}(h,\ol{k})| \lesssim M^8 \varepsilon^2.
\end{align}

\medskip
\noindent\textit{Step 2.} To apply the Moore-Penrose method, we need to bound the adjoint operator $T^*$ from below. Standard calculation shows the adjoint is of the form $$T^* = (P A , \ol{P} B ): \operatorname{Mat}_M (\C) \to zH^2 \oplus \ol{zH^2},$$ where $P$ and $\ol{P}$ denote the orthogonal projections from $L^2$ onto $zH^2$ and $\ol{zH^2}$ respectively, and the operators $A$, $B$:  $\operatorname{Mat}_M(\C)\to L^2$ are given by $$A: [c_{m,n}]_{M\times M} \mapsto \sum_{1\leq m,n\leq M} m c_{m,n} \ol{\varphi}^{m-1} \psi^n, \quad B: [c_{m,n}]_{M\times M} \mapsto \sum_{1\leq m,n\leq M} n c_{m,n} \ol{\varphi}^{m} \psi^{n-1}.$$ Let us compute the inner product of $Ac$ and $\varphi \psi Bc = \sum nc_{m,n} \ol{\varphi}^{m-1} \psi^n$. Indeed, one has
\begin{align*} 
    \ip{Ac}{\varphi\psi Bc} & = \sum_{1\leq m,n \leq M} \sum_{1\leq m',n' \leq M} mn' c_{m,n} \ol{c_{m',n'}} \ip{\ol{\varphi}^{m-1} \psi^n}{\ol{\varphi}^{m'-1} \psi^{n'}}\\
    & = \sum_{1\leq m,n \leq M} \sum_{1\leq m',n' \leq M} mn' c_{m,n} \ol{c_{m',n'}} \ip{\psi^{n-n'}}{\varphi^{m-m'}}.
\end{align*}
Since $\varphi(0)=\psi(0)=0$ and $\|\mu_{\varphi,\psi}-1 \|_\cal W <\varepsilon$, we have $$\ip{\psi^{n-n'}}{\varphi^{m-m'}}= \begin{cases}
    1, & \text{if} \,\,m=m', n= n';\\
    0, & \text{if} \,\,m=m', n\neq n';\\
    0, & \text{if} \,\,m\neq m', n= n'.\\
\end{cases} \quad \text{and } \quad |\ip{\psi^{n-n'}}{\varphi^{m-m'}}|< \varepsilon, \,\, \text{if } m\neq m', n\neq n'.$$ By the triangle inequality, this implies

\begin{align*} \big|\ip{Ac}{\varphi\psi Bc}\big| & \geq \sum_{1\leq m,n \leq M} |c_{m,n}|^2 - M^2 \varepsilon \cdot \sum_{1\leq m,n \leq M} \sum_{1\leq m',n' \leq M}  |c_{m,n}| |c_{m',n'}|\\
& = \|c\|^2_{HS} - M^2\varepsilon \bigg( \sum_{1\leq m,n\leq M} |c_{m,n}|\bigg)^2 \geq (1-M^4\varepsilon) \|c\|^2_{HS}.
\end{align*}
Since $M^4\varepsilon<e^{-2022}$, we see 
\begin{align}\label{eq:inner-product-lower-bound}
  \big|\ip{Ac}{\varphi\psi Bc}\big| \gtrsim \|c\|_{HS}^2.
\end{align}
On the other hand, the inner product can be decomposed as 
\begin{align*}
  \ip{Ac}{\varphi \psi Bc} = \ip{PAc}{\varphi \psi Bc}+ \ip{(I-P)Ac}{\varphi \psi \ol{P}Bc} + \ip{(I-P)Ac}{\varphi\psi (I-\ol{P})Bc}.
\end{align*}
Observe that the last term vanishes, since $\varphi\psi (I-\ol{P})Bc \in z H^2$ and $(I-P)Ac \in \ol{H^2}$. Therefore, by the Cauchy-Schwarz inequality, \begin{align*}
  \big| \ip{Ac}{\varphi \psi Bc}\big| & \leq \|PAc\|_2 \|\varphi\psi Bc\|_2 + \|(I-P)Ac\|_2\|\varphi\psi \ol{P}Bc\|_2 \\
  & = \|PAc\|_2 \|Bc\|_2 + \|(I-P)Ac\|_2 \|\ol{P}Bc\|_2\\
  & \leq \|PAc\|_2 \|Bc\|_2 + \|\ol{P}Bc\|_2 \|Ac\|_2\\
  & \leq \Big( \|PAc\|^2_2 + \|\ol{P}Bc\|^2_2\Big)^{1/2} \Big( \|Ac\|^2_2 + \|Bc\|^2_2\Big)^{1/2}.
\end{align*}
Note that $$\|PAc\|^2_2 + \|\ol{P}Bc\|^2_2 = \|T^*c\|_2^2,\quad \text{and } \|Ac\|_2, \|Bc\|_2 \leq M^2 \|c\|_{HS},$$ and hence $$\big| \ip{Ac}{\varphi\psi Bc}\big| \lesssim M^2 \|T^*c\|_2\|c\|_{HS}.$$ Combining this with \eqref{eq:inner-product-lower-bound}, we conclude that, up to a universal constant, $T^*$ is bounded from below by $1/M^2$, and hence (by the Lax--Milgram theorem) $TT^*$ is invertible and $$\|(T T^*)^{-1}\| \lesssim M^4.$$ 

Recall that $Y=[\ip{\varphi^m}{\psi^n}]_{M\times M} \in \operatorname{Mat}_M(\C)$. Now if we set $(h_0,\ol{k_0}) := T^*(TT^*)^{-1}(-Y)$, then $(h_0,\ol{k_0})$ is the minimal $L^2$-norm solution of the equation $T(h,\ol{k}) + Y =0$ and its norm can be bounded by 
\begin{align}\label{eq:norm-estimate-minimal-solution}
\|h_0\|_2^2 + \|k_0\|_2^2  \lesssim M^4 \varepsilon^2. 
\end{align} Indeed, \eqref{eq:norm-estimate-minimal-solution} follows from
\begin{align*}
    \|(h_0,k_0)\|^2_2 &= \ip{T^*(TT^*)^{-1}Y}{T^*(TT^*)^{-1}Y} = \ip{(TT^*)^{-1}Y}{TT^*(TT^*)^{-1}Y}\\ 
    & = \ip{(TT^*)^{-1}Y}{Y} \lesssim M^4 \|Y\|_{HS}^2 \leq M^4 \varepsilon^2.
\end{align*}

\medskip
\noindent\textit{Step 3.} Next, we adopt a purely function-theoretic truncation procedure. Consider
$$
  H(z):=\int_{\T}\frac{\zeta+z}{\zeta-z}\abs{h_0(\zeta)}\dd \sigma(\zeta),
  \qquad \widetilde{h}(z):=\frac{h_0(z)}{1+H(z)/\sqrt{\varepsilon}}, \quad z\in \D.
$$
The function $\widetilde{k}$ is defined in the same manner. We claim that the functions $\widetilde{h}$ and $\widetilde{k}$ satisfy the required conditions in \eqref{eq:perturbation-bounds} and \eqref{eq:finite-correction}. 

It is clear that $\widetilde{h}$ is analytic and vanishes at the origin. Moreover, since $\operatorname{Re}H$ is the Poisson integral of $|h_0|$ and the subharmonicity ensures $|h_0(z)|\leq\operatorname{Re} H(z)$, we deduce that  
$$
   |\widetilde{h}(z)|\leq\frac{|h_0(z)|}{1+\Rea H(z)/\sqrt{\varepsilon}} \leq \frac{|h_0(z)|}{1+|h_0(z)|/\sqrt{\varepsilon}}\leq \min\Big\{\sqrt{\varepsilon}, |h_0(z)| \Big\}.
$$
The same estimate holds for $\widetilde{k}$. Thus the inequalities in \eqref{eq:perturbation-bounds} follow from \eqref{eq:norm-estimate-minimal-solution}. It only remains to verify \eqref{eq:finite-correction}. By the Cauchy-Schwarz inequality and the boundedness of the Hilbert transform on $L^2$, we have $$\|h_0-\widetilde{h}\|_1 = \bigg\|\frac{h_0 H/\sqrt{\varepsilon}}{1+H/\sqrt{\varepsilon}}\bigg\|_1 \leq \|h_0H/\sqrt{\varepsilon}\|_1 \leq \varepsilon^{-1/2}\|h_0\|_2\|H\|_2 \lesssim \varepsilon^{-1/2} \|h_0\|_2^2.$$ Combining this with \eqref{eq:norm-estimate-minimal-solution}, one has $\|h_0- \widetilde{h}\|_1 \lesssim M^4 \varepsilon^{3/2}$. The same bound also holds for $\|k_0-\widetilde{k}\|_1$. Thus the first two terms in \eqref{eq:perturbation-equation} can be bounded by 
\begin{align*} 
  & \bigg| \ip{\varphi^m}{\psi^n} + m\ip{\varphi^{m-1}\widetilde{h}}{\psi^n}+n\ip{\varphi^m}{\psi^{n-1}\widetilde{k}}\bigg| \\
  \leq & \bigg| m\ip{\varphi^{m-1}\widetilde{h}}{\psi^n}- m\ip{\varphi^{m-1}h_0}{\psi^n}\bigg| + \bigg| n\ip{\varphi^m}{\psi^{n-1}\widetilde{k}}-n\ip{\varphi^m}{\psi^{n-1}k_0}\bigg| \\
  \leq & M \|\widetilde{h}-h_0\|_1 + M \|\widetilde{k}-k_0\|_1 \lesssim  M^5\varepsilon^{3/2}.
\end{align*}
Substituting this estimate and \eqref{eq:remainder-estimate} into \eqref{eq:perturbation-equation}, we conclude that
\begin{align*} 
  &\sum_{1\leq m,n\leq M} \bigg|\ip{(\varphi+\widetilde{h})^m}{(\psi +\widetilde{k})^n}\bigg|  \lesssim M^7 \varepsilon^{3/2} + M^8\varepsilon^2 \lesssim M^7 \varepsilon^{3/2}.
\end{align*}
This completes the proof of Proposition \ref{prop:correction}.
\end{proof}

\subsection{The final iteration argument}
\label{sec:iteration}

By virtue of Proposition \ref{prop:inner-approximation} and Proposition \ref{prop:correction}, we are allowed to perform the following two operations:
\begin{itemize}
    \item For any given $f$ and $g$ in $H^\infty$ satisfying $$f(0)=g(0)=0,\quad \|f\|_\infty<1, \quad \|g\|_\infty <1,$$ we can find a pair of inner functions $(\varphi,\psi)$ whose joint distribution approximates $\rho_{f,g}$ in $\cal W$.
    \item For any pair of inner functions $(\varphi,\psi)$ whose joint distribution satisfies $\|\mu_{\varphi,\psi}-1\|_\cal W < \varepsilon$, there is a $\sqrt{\varepsilon}$-perturbation $(\varphi+h, \psi +k)$ so that the sum of the first $M\times M$-mixed moments of them can be controlled by $C M^7 \varepsilon^{3/2}$. In particular, if we rescale the perturbed functions by setting
    \[
        f:=\frac{t}{1+\sqrt{\varepsilon}}(\varphi+h),\qquad g:=\frac{t}{1+\sqrt{\varepsilon}}(\psi+k),
    \]
    for a suitable $t<1$, then 
    $\|f\|_\infty, \|g\|_\infty\leq t <1$, and Proposition \ref{prop:inner-approximation} can be applied again.
\end{itemize}
These two operations set up a ``ping-pong'' iteration, which produces a sequence of pairs of inner functions whose joint distributions approach the normalized Lebesgue measure on $\T^2$. The independent inner functions will be obtained by taking the limit. The details are presented in the following.

\begin{proof}[Proof of Theorem \ref{thm:main}: Independent inner functions]
Recall $M(\varepsilon)= \lfloor \varepsilon^{-1/2026}\rfloor$. Choose an $\varepsilon_1<e^{-2026}$ so that $CM(\varepsilon_1)^7\varepsilon_1^{3/2}< \varepsilon_1^{5/4}$; here $C$ is the universal constant appearing in Proposition \ref{prop:correction}. Applying Proposition~\ref{prop:inner-approximation} (to the case $f=g=0$),
we obtain inner functions $\varphi_1,\psi_1$, both vanishing at the
origin, such that
$$
    \|\mu_{\varphi_1,\psi_1}-1\|_{\cal W}<\varepsilon_1.
$$
For such a pair of inner functions, an application of Proposition \ref{prop:correction} gives functions $h,k \in zH^\infty$ that satisfy $$\|h\|_\infty, \|k\|_\infty \leq \sqrt{\varepsilon_1}, \quad \|h\|_2 + \|k\|_2 \leq CM(\varepsilon_1)^2\varepsilon_1, $$ and $$\sum_{1\leq m,n \leq M(\varepsilon_1)} \bigg| \ip{(\varphi_1+h)^m}{(\psi_1+k)^n}\bigg| \leq C M(\varepsilon_1)^7\varepsilon^{3/2}_1.$$
Let $t<1$ to be specified later, and put
\begin{align}\label{eq:rescale-functions}
    f_1:=\frac{t}{1+\sqrt{\varepsilon_1}}(\varphi_1+h),\qquad g_1:=\frac{t}{1+\sqrt{\varepsilon_1}}(\psi_1+k).
\end{align}
Then $f_1(0)=g_1(0)=0$ and $\|f_1\|_\infty,\|g_1\|_\infty \leq t <1.$
By \eqref{eq:Fourier-coefficient-balayage} and \eqref{eq:finite-correction} we have
\begin{align}
    \|\rho_{f_1,g_1}-1\|_{\cal W}/2
    &=\sum_{m,n\geq1}|\ip{f_1^m}{g_1^n}| \nonumber \\
    &\leq 
    \sum_{1\leq m,n\leq M(\varepsilon_1)}
       \left|\ip{(\varphi_1+h)^m}{(\psi_1+k)^n}\right|
    +\sum_{\substack{m,n\geq1\\\max\{m,n\}>M(\varepsilon_1)}}
       t^{m+n}.\label{eq:superlinear-iteration}
\end{align}
The first term does not exceed $CM(\varepsilon_1)^7\varepsilon_1^{3/2} < \varepsilon_1^{5/4}$. For the second term, we apply the estimate 
\begin{align*} 
\sum_{\substack{m,n\geq1\\\max\{m,n\}>M(\varepsilon_1)}}
t^{m+n} \leq \bigg(\sum_{m> M(\varepsilon_1)} t^m\bigg)\bigg(\sum_{n\geq 1} t^n\bigg)+ \bigg(\sum_{m\geq 1} t^m\bigg)\bigg(\sum_{n >  M(\varepsilon_1)} t^n\bigg)\lesssim \frac{t^{M(\varepsilon_1)}}{(1-t)^2}.
\end{align*} 
Therefore, by choosing $t= 1-1/\sqrt{M(\varepsilon_1)}$, and then using $M(\varepsilon)\sim \varepsilon^{-1/2026}$ and $xe^{-\sqrt{x}}\lesssim x^{-5000},$ one has $$\frac{t^{M(\varepsilon_1)}}{(1-t)^2} \lesssim M(\varepsilon_1) e^{-\sqrt{M(\varepsilon_1)}}\lesssim \varepsilon_1^{5/4}.$$ 
Substituting these estimates into \eqref{eq:superlinear-iteration}, we obtain
\begin{align}\label{eq:superlinear-Wiener}
\|\rho_{f_1,g_1}-1\|_{\cal W}\lesssim \varepsilon_1^{5/4}.
\end{align}
Meanwhile, a direct calculation using \eqref{eq:perturbation-bounds} gives
\begin{align*}
    \|f_1-\varphi_1\|_2
    &\leq \frac{1-t+\sqrt{\varepsilon_1}}{1+\sqrt{\varepsilon_1}}\|\varphi_1\|_2 + \frac{t}{1+\sqrt{\varepsilon_1}} \|h\|_2 \\
    & \lesssim M(\varepsilon_1)^{-1/2} + \varepsilon_1^{1/2} + M(\varepsilon_1)^2 \varepsilon_1 \\
    & \lesssim M(\varepsilon_1)^{-1/2} \asymp \varepsilon_1^{1/4052}.
\end{align*}
The same estimate holds for $\|g_1-\psi_1\|_2$ and hence 
\begin{align}\label{eq:superlinear-2-norm-iteration}
    1-\|f_1\|_2 \lesssim \varepsilon_1^{1/4052}, \quad 1-\|g_1\|_2
    \lesssim \varepsilon_1^{1/4052}.
\end{align}

Now we apply Proposition~\ref{prop:inner-approximation} to the pair $(f_1,g_1)$. By \eqref{eq:superlinear-Wiener}, there exists a pair of inner functions $(\varphi_2,\psi_2)$, both vanishing at the origin, such that 
\begin{align*} 
  \|\mu_{\varphi_2,\psi_2}-1\|_\cal W \leq  \|\mu_{\varphi_2,\psi_2}-\rho_{f_1,g_1}\|_\cal W + \|\rho_{f_1,g_1}-1\|_\cal W 
  < C \varepsilon_1^{5/4}.
\end{align*}
 Meanwhile, it follows from \eqref{eq:inner-approximation-norm-estimate} and \eqref{eq:superlinear-2-norm-iteration} that 
\begin{align*}
   \|\varphi_2 - \varphi_1\|_2  \leq \|\varphi_2-f_1\|_2 +\|f_1-\varphi_1\|_2 < C \varepsilon_1^{1/8104}, \quad \|\psi_2-\psi_1\|_2 < C \varepsilon_1^{1/8104}.
\end{align*}
Here $C$ is a fixed universal constant.

Now we set $\varepsilon_{j+1}:= C\varepsilon_j^{5/4}$. Following the above argument word for word, we can inductively
construct a sequence of pairs of inner functions
$(\varphi_j,\psi_j)_{j\geq1}$ such that
\begin{equation}\label{eq:iteration-estimate-8104}
\begin{gathered}
  \|\mu_{\varphi_j,\psi_j}-1\|_{\cal W}
  \leq \varepsilon_j,\\
  \|\varphi_{j+1}-\varphi_j\|_2
  \leq C\varepsilon_j^{1/8104},
  \qquad
  \|\psi_{j+1}-\psi_j\|_2
  \leq C\varepsilon_j^{1/8104}.
\end{gathered}
\end{equation}
Decreasing $\varepsilon_1$ if necessary, we may assume that $\varepsilon_{j+1}< \varepsilon_j/2$. In particular, both $(\varphi_j)_{j\geq 1}$ and $(\psi_j)_{j\geq 1}$ are Cauchy sequences in $zH^2$, and hence $$\varphi_j \longrightarrow \varphi, \quad \psi_j\longrightarrow \psi, \quad \text{in } zH^2.$$ We claim that $\varphi$ and $\psi$ are independent inner functions. Indeed, passing to an almost everywhere convergent subsequence, we see $\varphi$ and $\psi$ are inner. Moreover, applying the dominated convergence theorem along the same subsequence gives 
$$
    \int_{\T} \varphi^m\ol{\psi}^n \dd\sigma 
    =\lim_{j\to\infty} \int_\T \varphi_{j}^{m} \ol{\psi_j}^{n} \dd \sigma
    =0,
    \qquad m,n\geq1.
$$
This completes the proof of the first part of Theorem \ref{thm:main}.
\end{proof}

\subsection{The proof of Lemma \ref{lem:oscillation}}\label{sect:random-inner-function}

Recall that for a finite positive singular measure $\mu$ on $\T$, the boundary values of the associated singular inner function $S_\mu$ are given by
\begin{equation}\label{eq:singular-boundary}
	S_\mu(e^{it})=
	\exp\left(-i\,\operatorname{p.v.}\!\int_0^{2\pi}\cot\frac{t-x}{2}\,\dd\mu(e^{ix})
	\right)
	\quad\text{for a.e. }t\in[0,2\pi),
\end{equation}
where $\operatorname{p.v.}$ denotes the Cauchy principal value.
For each fixed $e^{it}\in\T$, we have
$$
\int_{\T\times(0,\infty)}
\left(1\wedge a\left| \cot\frac{t-x}{2}\right| 
\right)
\,\dd\nu(e^{ix},a)=\frac{\lambda}{\beta(1-\beta)}\int_{\T}\left| \cot\frac{t-x}{2}\right|  ^\beta\,\dd\sigma(e^{ix})
<\infty,
$$
and hence by \eqref{eq:poisson-integrability}, 
$\int_{\T}\left| \cot\frac{t-x}{2}\right| \,\dd\widetilde{\Pi}(e^{ix})<\infty$ almost surely. Then Fubini's theorem and \eqref{eq:singular-boundary} give that almost surely,
\begin{equation*}
	S_{\widetilde{\Pi}}(e^{it})=e^{-i\Phi(t)},
	\qquad \text{where \,}
	\Phi(t):=
	\int_0^{2\pi}\cot\frac{t-x}{2}\,\dd\widetilde{\Pi}(e^{ix}),
\end{equation*}
for a.e. $e^{it}\in\T$, with the integral converging absolutely. Therefore, we have
for a.e. $t,s\in[0,2\pi)$
$$
\E\left[ \cramped{S_{\widetilde{\Pi}}(e^{it})^m} \overline{S_{\widetilde{\Pi}}(e^{is})^m}\right]
=\E\exp\left(-im\bigl(\Phi(t)-\Phi(s)\bigr)\right).
$$
The exponential formula for Poisson random measures \cite{LastPenrose} states that if $h$ is a real-valued function satisfying
$\int (1\wedge |h|)\,\dd\nu<\infty$, then
\begin{equation}\label{eq:poisson-index}
	\E\exp\left(i\int h\,\dd\Pi\right)
	=\exp\left(\int (e^{ih}-1)\,\dd\nu\right).
\end{equation}
Taking $h(e^{ix},a)=-ma\bigl(\cot\frac{t-x}{2}-\cot\frac{s-x}{2}\bigr)$ in \eqref{eq:poisson-index}, one has
\begin{align*}
	& \,\left|
	\E\left[\cramped{S_{\widetilde{\Pi}}(e^{it})^m} 
	\overline{S_{\widetilde{\Pi}}(e^{is})^m}\right]
	\right| \\
	=&\,\exp\left(
	-\lambda\int_0^{2\pi}\int_0^\infty
	\left[1-\cos\left(
	ma\Big(\cot\frac{t-x}{2}-\cot\frac{s-x}{2}\Big)
	\right)\right]
	\frac{\dd a}{a^{1+\beta}}\,\dd\sigma(e^{ix})
	\right)\\
	=&\,\exp\left(-c_\beta\lambda m^\beta J_\beta(t,s)\right),
\end{align*}
where
$$
J_\beta(t,s):=\int_\T
\left|\cot\frac{t-x}{2}-\cot\frac{s-x}{2}\right|^\beta
\,\dd\sigma(e^{ix}),
\qquad
c_\beta:=\int_0^\infty(1-\cos a)\frac{\dd a}{a^{1+\beta}}.
$$
Put $\delta(t,s):=\min\{|t-s|,2\pi-|t-s|\}\in[0,\pi]$. For distinct \(t,s\in[0,2\pi)\), a direct computation gives
$$
J_\beta(t,s)=J_\beta(\delta,0)
\geq \frac{1}{2\pi}\int_{\delta/4}^{\delta/2}
\left(\cot\frac{\delta-x}{2}+\cot\frac{x}{2}\right)^\beta
\,\dd x
\geq \frac{1}{8\pi}\delta^{1-\beta}.
$$
This implies
\begin{equation}\label{eq:exp-Poisson-order}
	\left|
	\E\left[\cramped{S_{\widetilde{\Pi}}(e^{it})^m}
	\overline{S_{\widetilde{\Pi}}(e^{is})^m}\right]
	\right|
	\leq \exp\left(-\frac{c_\beta}{8\pi}\lambda m^\beta \delta(t,s)^{1-\beta}\right).
\end{equation}
Since $S_1$ and $S_2$ are independent, the left-hand side of \eqref{eq:oscillation} is
$$
\E\left[\left|\int_{\T}b\, \cramped{S_1^m} \overline{S_2^n}\dd\sigma\right|^2\right]
=\int_{\T}\int_{\T}b(z)\overline{b(w)}\,\E\left[\cramped{S_1(z)^m}\overline{S_1(w)^m}\right]\E\left[\overline{S_2(z)^n} \cramped{S_2(w)^n}\right]\dd\sigma(z)\dd\sigma(w).
$$
Substituting \eqref{eq:exp-Poisson-order}, we conclude that
$$
\begin{aligned}
	\E\left[\left|\int_{\T}
	b\,\cramped{S_1^m}\overline{S_2^n}\,\dd\sigma\right|^2\right]
	&\leq \|b\|_\infty^2
	\int_{\T}\int_{\T}
	\exp\left(-\frac{c_\beta}{8\pi}\lambda(m^\beta+n^\beta)\delta(t,s)^{1-\beta}\right)
	\,\dd\sigma(e^{it})\,\dd\sigma(e^{is})\\
	&\leq \frac{\|b\|_\infty^2}{\pi}
	\int_0^\pi
	\exp\left(-\frac{c_\beta}{8\pi}\lambda(m^\beta+n^\beta)u^{1-\beta}\right)
	\,\dd u\\
	&\leq \frac{\|b\|_\infty^2}{\pi}
	\int_0^\infty
	\exp\left(-\frac{c_\beta}{8\pi}\lambda(m^\beta+n^\beta)u^{1-\beta}\right)
	\,\dd u\\
	&=C\|b\|_\infty^2
	\bigl[\lambda(m^\beta+n^\beta)\bigr]^{-\frac{1}{1-\beta}},
\end{aligned}
$$
where $C$ is a constant depending only on $\beta$. This completes the proof.

\section{Independent Blaschke products}
\label{sec:independent-blaschke}

In this section, we explain how to refine the arguments in Section \ref{sec:independent-inner-functions} to obtain
independent Blaschke products, both vanishing at the origin. It should be noted that a naive application of Frostman's theorem does not suffice, since the resulting Blaschke products need not
vanish at the origin. To ensure that both inner functions at each stage of the iteration can be chosen to be Blaschke products, we refine Propositions~\ref{prop:inner-approximation} and~\ref{prop:correction} as follows.

\begin{proposition}
\label{prop:blaschke-approximation}
Let $B_1,B_2$ be Blaschke products satisfying $B_1(0)=B_2(0)=0.$
Suppose that
\[
  f\in B_1H^\infty,\qquad g\in B_2H^\infty,
  \qquad
  \|f\|_\infty<1,\qquad \|g\|_\infty<1.
\]
Then for every $\varepsilon>0$, there exist two Blaschke products $\varphi \in B_1H^\infty$ and $\psi\in B_2H^\infty$ such that
\begin{equation*}
  \mu_{\varphi,\psi}\in\cal W,
  \qquad
  \|\mu_{\varphi,\psi}-\rho_{f,g}\|_{\cal W}<\varepsilon.
\end{equation*}
Moreover, they can be chosen so that
\begin{equation*}
  \|\varphi-f\|_2^2<10(1-\|f\|_2^2),
  \qquad
  \|\psi-g\|_2^2<10(1-\|g\|_2^2).
\end{equation*}
\end{proposition}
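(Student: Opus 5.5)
The plan is to redo the proof of Proposition~\ref{prop:inner-approximation}, with two changes: the singular Poisson random inner functions are replaced by Blaschke products via Frostman, and the ``polynomial'' approximants are replaced by multiples of $B_1$ and $B_2$.

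\emph{Reduction to finitely many parameters.} Write $f=B_1f_0$ and $g=B_2g_0$ with $f_0,g_0\in H^\infty$. First I approximate $f_0,g_0$ by polynomials $p,q$ (Ces\`aro means), and set $f_N:=B_1p$ and $g_N:=B_2q$. Then $\|f_N\|_\infty\le\|B_1\|_\infty\|p\|_\infty<1$ for large $N$, and $\rho_{f_N,g_N}\to\rho_{f,g}$ in $\cal W$ as before, by dominated convergence on the Fourier coefficients. However, $f_N$ is not a polynomial, so the two-variable construction $\tau_f$ must be modified. I would use
\[
\varphi := B_1\,\tau_p(z,S_1(z)),\qquad \psi:=B_2\,\tau_q(z,S_2(z)).
\]
Here $\tau_p,\tau_q$ are exactly as in the paper; note that $\tau_p(z,w)$ has Taylor expansion $p+z^{N}(\cdots)$. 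This makes $\varphi$ inner and divisible by $B_1$, with $\varphi(0)=0$ automatic since $B_1(0)=0$. The norm estimate also survives: $\varphi-f_N=B_1(\tau_p(\cdot,S_1)-p)$ and $\|B_1 u\|_2=\|u\|_2$. Hence $\|\varphi-f_N\|_2^2=1-\|p\|_2^2=1-\|f_N\|_2^2$, and the final triangle-inequality step is unchanged.

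\emph{Moment estimate.} Expand $\varphi^m=\sum_k B_1^m a_{m,k}S_1^k$, where $a_{m,k}$ are the coefficients for $\tau_p$. The bound \eqref{eq:consequence-Cauchy-estimate} holds for $B_1^m a_{m,k}$ since $|B_1|\le 1$. Lemma~\ref{lem:oscillation} takes arbitrary bounded $b$, so $b=B_1^m a_{m,k}\overline{B_2^n b_{n,\ell}}$ is allowed. The same summation then gives $\E\|\mu_{\varphi,\psi}-\rho_{f_N,g_N}\|_{\cal W}\lesssim\lambda^{-1/(2(1-\beta))}$.

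\emph{Making the factors Blaschke (main obstacle).} The functions $\tau_p(z,S_1)$ are not Blaschke products, and Frostman applied to them would destroy the vanishing at $0$. I would instead apply Frostman to $S_1$ alone. For all $\alpha$ outside a capacity-zero set, $S_1^\alpha:=(S_1-\alpha)/(1-\bar\alpha S_1)$ is a Blaschke product. The function $\tau_p(z,S_1^\alpha(z))$ is then a Blaschke product too: composing a finite Blaschke product in $w$ (an automorphism) times $z^{2N}$ with a Blaschke product gives, by Frostman's composition result, a Blaschke product for generic parameters. Alternatively one can write it as $\tau_p(z,\cdot)\circ$ Blaschke, which is inner with no singular factor, because $\tau_p$ is rational inner.

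The moment estimate must then hold for $S^\alpha$ in place of $S$ with $\alpha$ small. As $\alpha\to0$, $S_1^\alpha\to S_1$ uniformly on $\T$, with error $O(|\alpha|)$. Each term $\int b\,(S_1^\alpha)^k\overline{(S_2^{\alpha'})^\ell}\dd\sigma$ then differs from the one with $S_1,S_2$ by $O((k+\ell)|\alpha|)$. Since the $k,\ell$ sums have summable weights $(k+1)(\ell+1)$ only after the decay factor, I would truncate. Terms with $k+\ell>K$ are bounded by $\sum_{m}\|a_{m,k}\|_\infty\le C(k+1)$ times $\min\{1,\cdot\}$; this needs care. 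A cleaner route is to use $\|a_{m,k}\|_\infty\le 2^kR^m$, which gives absolute convergence in $k$ uniformly in the choice of unimodular $w$-functions. Then one fixes a sample $S_1,S_2$ realizing error $<\varepsilon/3$ and chooses $|\alpha|,|\alpha'|$ small and Frostman-admissible. Continuity of $(w_1,w_2)\mapsto\sum_{m,n}|\langle B_1^m\tau_p(\cdot,w_1)^m, B_2^n\tau_q(\cdot,w_2)^n\rangle-\cdots|$ under uniform perturbation of $w_1,w_2$ would close the argument. Verifying this uniform continuity in the $\cal W$ norm is the step I expect to be delicate. I would handle it via the geometric decay $R^m$ in $m$ for $k\ge1$, together with the fact that the $k=0$ term is unaffected.
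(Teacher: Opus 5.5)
Your reduction and the inner-function part are sound and agree with the paper up to bookkeeping: the paper writes $f=(B_1/z)\widetilde f$ so that $\widetilde f(0)=0$, whereas you write $f=B_1f_0$. The Blaschke step, however, has a genuine gap.

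\textbf{The Blaschke claim is false.} You assert that $\tau_p(z,u(z))$ is a Blaschke product whenever $u$ is, ``because $\tau_p$ is rational inner''. Here is a counterexample. Take $S_0(z)=\exp\bigl(-\frac{1+z}{1-z}\bigr)$ and $c=S_0(0)=e^{-1}$. Then $\beta:=(S_0-c)/(1-cS_0)$ is a Blaschke product with a simple zero at $0$. Indeed, a singular factor could only be a point mass at $1$, and this is excluded because $\abs{\beta(r)}\to c\neq0$ as $r\to1^-$. The same radial argument at the $2N$-th roots of unity shows that $u(z):=\beta(z^{2N})/z^{2N}$ is a Blaschke product. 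Now take the constant polynomial $p\equiv-c$, which can occur as a Ces\`aro mean of $f/B_1$ (e.g.\ $f=-cB_1$). A direct computation gives $\tau_p(z,u(z))=-S_0(z^{2N})$, a singular inner function. So making $S_1^\alpha$ Blaschke is beside the point. What you need is a Frostman-type theorem for the composed family $\alpha\mapsto\tau_p(z,S_1^\alpha(z))$, and you do not supply one.

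\textbf{The continuity step fails as sketched.} Once you freeze a sample $S_1,S_2$ and choose $\alpha$ depending on it, Lemma~\ref{lem:oscillation} is an expectation statement and no longer gives any decay in $(k,\ell)$. The trivial bounds do not suffice. You have $\norm{(S_1^\alpha)^k-S_1^k}_\infty\lesssim k\abs{\alpha}$, and $\norm{a_{m,k}}_\infty\le\min\{1,2^kR^m\}$ yields only $\sum_m\norm{a_{m,k}}_\infty\lesssim k+1$. Together these give an error of order $\abs{\alpha}\sum_{k,\ell}(k+\ell)(k+1)(\ell+1)=\infty$. There is no reason for $\mu_{\varphi_\alpha,\psi_{\alpha'}}$ to stay close to $\mu_{\varphi,\psi}$ in $\cal W$.

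\textbf{How the paper avoids both issues.} It randomizes with an independent rotation rather than a M\"obius shift: $\widetilde\varphi=\tau_{\widetilde f}(z,e^{i\theta_1}S_1(z))$ with $\theta_1$ uniform. The same trick works in your normalization with $p\not\equiv0$. A rotation only multiplies each term $\int b\,S_1^k\overline{S_2}^{\,\ell}\dd\sigma$ by $e^{i(k\theta_1-\ell\theta_2)}$. Hence the expected $\cal W$-bound holds verbatim, with no perturbation argument. For the Blaschke property, Jensen's formula in $\theta$ gives
\[
\int_0^{2\pi}\log\abs{\tau_{\widetilde f}(z,e^{i\theta}s)}\,\frac{\dd\theta}{2\pi}=\log\max\bigl\{\abs{\widetilde f(z)},\abs{z}^{2N}\abs{s}\bigr\},\qquad \abs{s}\le1 .
\]
With $z=r\zeta$ and $s=S_1(r\zeta)$, this tends to $0$ in $L^1(\sigma)$ as $r\to1^-$, provided $\widetilde f\not\equiv0$. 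Monotonicity in $r$ and Fubini then give $\lim_{r\to1}\int_\T\log\abs{\widetilde\varphi(r\zeta)}\dd\sigma(\zeta)=0$ for a.e.\ $\theta_1$, i.e.\ $\widetilde\varphi$ is almost surely a Blaschke product. This is also why the paper approximates by \emph{nonzero} polynomials: for $\widetilde f\equiv0$ one gets $-z^{2N}e^{i\theta_1}S_1$, which is almost surely not a Blaschke product.
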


\begin{proof}
Write $f=(B_1/z)\widetilde f$ and $g=(B_2/z)\widetilde g$. Then $\widetilde{f}$ and $\widetilde{g}$ are $H^\infty$-functions satisfying \eqref{eq:schur-function}, and hence the argument in the proof of Proposition \ref{prop:inner-approximation} can be adopted for $\widetilde{f}$ and $\widetilde{g}$. Similarly, we also first consider the case where $\widetilde{f}$ and $\widetilde{g}$ are polynomials of degree less than $N$. Then the only modification is to replace the random inner functions
in \eqref{eq:random-inner-approximation} by
$$
  \widetilde{\varphi}(z):=\tau_{\tilde{f}}(z,e^{i\theta_1}S_1(z)), \quad \widetilde{\psi}(z):= \tau_{\tilde{g}}(z,e^{i\theta_2}S_2(z))
$$
where $\theta_1,\theta_2$ are fresh independent uniform random variables on $[0,2\pi)$. The key point is that when $\widetilde{f}$ and $\widetilde{g}$ are nonzero polynomials of degree less than $N$, $\widetilde{\varphi}$ and $\widetilde{\psi}$ are almost surely Blaschke products. This fact follows from a standard argument using the classical characterization (see \cite[Section 5.3]{Rudin1969}): an inner function $I$ is a Blaschke product
if and only if
$$
  \lim_{r\to 1^-}
  \int_\T\log|I(rw)|\,d\sigma(w)=0.
$$
Following the same lines as the proof of Proposition~\ref{prop:inner-approximation}, and then setting $$
  \varphi:=\frac{B_1}{z}\widetilde\varphi,\qquad
  \psi:=\frac{B_2}{z}\widetilde\psi,
$$
one obtains the desired Blaschke products. The general case follows by approximation with nonzero polynomials, exactly as in the final part of the proof of Proposition~\ref{prop:inner-approximation}. We omit the details.
\end{proof}

\begin{proposition}
\label{prop:blaschke-correction}
Let $0<\varepsilon<e^{-2026}$ and $M(\varepsilon):=\lfloor\varepsilon^{-1/2026}\rfloor.$
Let $B_1,B_2$ be Blaschke products satisfying
$B_1(0)=B_2(0)=0$. Then for any given inner functions $\varphi$, $\psi$ satisfying $$
  \varphi\in B_1H^\infty,\qquad
  \psi\in B_2H^\infty,
  \qquad
  \|\mu_{\varphi,\psi}-1\|_{\cal W}<\varepsilon,$$
there exist small perturbations $h\in B_1H^\infty$, $k\in B_2H^\infty$ such that
\begin{equation*}
  \|h\|_2+\|k\|_2\leq CM^2\varepsilon,
  \qquad
  \|h\|_\infty,\ \|k\|_\infty\leq\sqrt{\varepsilon},
\end{equation*}
and
\begin{equation*}
  \sum_{1\leq m,n\leq M}
  \left|
    \left\langle
      (\varphi+h)^m,(\psi+k)^n
    \right\rangle
  \right|
  \leq CM^7\varepsilon^{3/2}.
\end{equation*}
Here the constant $C$ is universal.
\end{proposition}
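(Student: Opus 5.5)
We follow the three-step proof of Proposition~\ref{prop:correction} verbatim. The only change is that the domain of the linearized operator is replaced by $B_1H^2\oplus\overline{B_2H^2}$. We also need to check that the truncation in Step~3 preserves membership in $B_1H^\infty$ (resp. $B_2H^\infty$). Concretely, we define
\[
T_B: B_1H^2\oplus\overline{B_2H^2}\longrightarrow \operatorname{Mat}_M(\C),\qquad (h,\ol k)\mapsto \big[m\ip{\varphi^{m-1}h}{\psi^n}+n\ip{\varphi^m}{\psi^{n-1}k}\big]_{M\times M}.
\]
Step~1 (the remainder estimate \eqref{eq:remainder-estimate}) uses only $\|h\|_\infty,\|k\|_\infty\le\sqrt\varepsilon$ and the $L^2$ bounds, so it carries over unchanged.

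For Step~2, the adjoint is now $T_B^*=(P_1A,\ol{P_2}B)$, where $P_1$ is the orthogonal projection onto $B_1H^2$ and $\ol{P_2}$ is the projection onto $\overline{B_2H^2}$. We redo the lower bound $\|T_B^*c\|_2\gtrsim M^{-2}\|c\|_{HS}$. The lower bound \eqref{eq:inner-product-lower-bound} on $|\ip{Ac}{\varphi\psi Bc}|$ is unchanged, since it does not involve projections. In the decomposition we split $Ac=P_1Ac+(I-P_1)Ac$ and $Bc=\ol{P_2}Bc+(I-\ol{P_2})Bc$. The key point is that the cross term $\ip{(I-P_1)Ac}{\varphi\psi(I-\ol{P_2})Bc}$ still vanishes. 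Indeed, $(I-\ol{P_2})Bc\in (\overline{B_2H^2})^\perp$, and multiplying by $\psi\in B_2H^\infty$ maps this space into $B_1 H^2$... We must verify this carefully.

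Here is the verification. Since $\ol B_2 (\overline{B_2H^2})^{\perp}=\ol{B_2}\,\overline{(B_2H^2)^{\perp}}$ and $(B_2H^2)^\perp=\ol{zH^2}\oplus K_{B_2}$, we get $\psi(I-\ol{P_2})Bc\in \psi\big(zH^2\oplus \overline{K_{B_2}}\big)$. Because $\overline{K_{B_2}}\subset \ol{B_2}zH^2$, we have $\psi\,\overline{K_{B_2}}\subset (\psi/B_2) zH^2\subset H^2$. Multiplying further by $\varphi\in B_1H^\infty$ puts everything in $B_1H^2$, which is orthogonal to $(I-P_1)Ac$. With the cross term gone, the Cauchy--Schwarz chain proceeds as before, using that multiplication by the unimodular $\varphi\psi$ is isometric. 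This gives $\|(T_BT_B^*)^{-1}\|\lesssim M^4$, and hence a minimal-norm solution $(h_0,\ol{k_0})$ with $h_0\in B_1H^2$, $k_0\in B_2H^2$, and $\|h_0\|_2^2+\|k_0\|_2^2\lesssim M^4\varepsilon^2$. I expect this verification of the vanishing cross term to be the main obstacle. If it fails as stated, I would fall back on writing $h=B_1u$ and $k=B_2v$, with $u,v\in H^2$, and rerun the argument with $\varphi,\psi$ replaced by the inner functions $\varphi\ol{B_1}$-weighted expressions.

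For Step~3, we truncate exactly as before: $\widetilde h=h_0/(1+H/\sqrt\varepsilon)$, where $H$ is the Herglotz integral of $|h_0|$. Since $\Rea H\ge0$, the factor $1/(1+H/\sqrt\varepsilon)$ lies in $H^\infty$ with norm at most $1$. Hence $\widetilde h\in B_1H^2$, and combined with the pointwise bound $|\widetilde h|\le\min\{\sqrt\varepsilon,|h_0|\}$ we get $\widetilde h\in B_1H^\infty$. Note that $B_1H^2\cap H^\infty=B_1H^\infty$, because the factorization is inner--outer. The same holds for $\widetilde k$. The $L^1$ estimate $\|h_0-\widetilde h\|_1\lesssim\varepsilon^{-1/2}\|h_0\|_2^2$ is unchanged, and the final bound $CM^7\varepsilon^{3/2}$ follows exactly as in the proof of Proposition~\ref{prop:correction}.
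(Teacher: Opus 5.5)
Your proposal is correct and follows the paper's approach: replace the domain of $T$ by $B_1H^2\oplus\overline{B_2H^2}$ and rerun the three steps of Proposition~\ref{prop:correction}. Your two checks — that $\varphi\psi\,(\overline{B_2H^2})^\perp\subset\varphi H^2\subset B_1H^2$, so the cross term still vanishes, and that the truncation gives $\widetilde h\in B_1H^2\cap H^\infty=B_1H^\infty$ (similarly for $\widetilde k$) — are exactly the details the paper leaves implicit, so the fallback you mention is not needed.
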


The proof of Proposition~\ref{prop:blaschke-correction} requires no essential changes; one only needs to replace the domain of $T$ by
$B_1H^2\oplus\overline{B_2H^2}$ and repeat the argument. The final construction also requires a minor refinement.

\begin{proof}[Proof of Theorem~\ref{thm:main}: Independent Blaschke products]
Choose a small $\varepsilon_1<e^{-2026}$. Applying Proposition~\ref{prop:blaschke-approximation} (to the case 
$B_1=B_2=z$ and $f=g=0$) gives Blaschke products $\varphi_1,\psi_1$, both vanishing at the origin, such that $
  \|\mu_{\varphi_1,\psi_1}-1\|_{\cal W}<\varepsilon_1.$

If the pair $(\varphi_j,\psi_j)$ is determined, then one can apply
Proposition~\ref{prop:blaschke-correction} (to the case $B_1=\varphi_j$ and $B_2=\psi_j$) to obtain small perturbations $h_j\in\varphi_jH^\infty$ and $k_j\in\psi_jH^\infty$. Following the argument from \eqref{eq:rescale-functions} to \eqref{eq:iteration-estimate-8104}, one obtains Blaschke products $\varphi_{j+1},\psi_{j+1}$ satisfying 
\begin{equation*}
\begin{gathered}
  \|\mu_{\varphi_j,\psi_j}-1\|_{\cal W}
  \leq \varepsilon_j,\\
  \|\varphi_{j+1}-\varphi_j\|_2
  \leq C\varepsilon_j^{1/8104},
  \qquad
  \|\psi_{j+1}-\psi_j\|_2
  \leq C\varepsilon_j^{1/8104},
\end{gathered}
\end{equation*} where $\varepsilon_{j+1}=C\varepsilon_j^{5/4}$. As in the original proof, these limits are independent
inner functions vanishing at the origin. 

However, observe that the sequence of Blaschke pairs obtained
from Propositions~\ref{prop:blaschke-approximation}
and~\ref{prop:blaschke-correction} satisfies $$
  \varphi_j\mid\varphi_{j+1},
  \qquad
  \psi_j\mid\psi_{j+1},
  \qquad j\ge1.$$
These divisibility relations suffice to ensure that the limits are still Blaschke products. Indeed, divisibility and $H^2$ convergence imply that $\varphi_j\mid \varphi$ and $
\|\varphi/\varphi_j-1\|_2
  =\|\varphi-\varphi_j\|_2\to 0$.
Since each $\varphi_j$ is a Blaschke product, any singular
inner factor $S$ of $\varphi$ also divides $\varphi/\varphi_j$. As a consequence, $
  \left|\left(\varphi/\varphi_j\right)(0)\right|
  \le |S(0)|\le1.$
The left-hand side tends to $1$, so $S$ is constant and hence $\varphi$ is a Blaschke product. The same argument applies to $\psi$, completing the proof.
\end{proof}

\smallskip
\noindent\textbf{Acknowledgment.} Kunyu Guo is supported by the National Key R\&D Program of China (2024\\YFA1013400) and NSFC (No.~12231005). Dilong Li is supported by NSFC (No.~124B2005).

\bigskip
\begingroup
\small
\noindent\textbf{Xiangdi Fu}\\
School of Fundamental Physics and Mathematical Sciences,
HIAS, University of Chinese Academy of Sciences, Hangzhou 310024, China\\
\textit{Email address:}
\href{mailto:xdfu@ucas.ac.cn}{\nolinkurl{xdfu@ucas.ac.cn}}

\medskip
\noindent\textbf{Kunyu Guo}\\
School of Mathematical Sciences,
Fudan University, Shanghai 200433, China\\
\textit{Email address:}
\href{mailto:kyguo@fudan.edu.cn}{\nolinkurl{kyguo@fudan.edu.cn}}

\medskip
\noindent\textbf{Dilong Li}\\
School of Mathematical Sciences,
Fudan University, Shanghai 200433, China\\
\textit{Email address:}
\href{mailto:dlli23@m.fudan.edu.cn}{\nolinkurl{dlli23@m.fudan.edu.cn}}

\medskip
\noindent\textbf{Yanqi Qiu}\\
School of Fundamental Physics and Mathematical Sciences,
HIAS, University of Chinese Academy of Sciences, Hangzhou 310024, China\\
\textit{Email address:}
\href{mailto:yanqiqiu@ucas.ac.cn}{\nolinkurl{yanqiqiu@ucas.ac.cn}}
\endgroup

\end{document}